\numberwithin{equation}{section}
\DeclareMathOperator{\Adm}{Adm}
\DeclareMathOperator{\id}{id}
\DeclareMathOperator{\ParP}{Parp} 
\DeclareMathOperator{\rank}{rank}
\DeclareMathOperator{\Sky}{Sky}
\newcommand{\padup}{\rule{0mm}{4mm}}
\newcommand{\0}{\emptyset}
\newcommand{\x}{\times}
\newcommand{\excise}[1]{}
\newcommand{\Case}[1]{\emph{Case~#1:}}
\newcommand{\SR}{SR} 
\newcommand{\sgn}{\varepsilon}
\newcommand{\simij}{\underset{i,j}{\sim}}
\newcommand{\sm}{\setminus}
\newcommand{\st}{\colon}
\newcommand{\Sym}{\mathfrak{S}}
\newcommand{\Alt}{\mathfrak{A}}
\newcommand{\qqandqq}{\qquad\text{and}\qquad}
\newcommand{\biqjr}[4]{b({#1}\uparrow{#2},{#3}\downarrow{#4})} 
\newcommand{\e}{{\mathbf{e}}}
\renewcommand{\H}{{\mathbf{H}}}
\newcommand{\J}{{\mathbf{J}}}
\newcommand{\soff}{\mathbf{w}}  
\newcommand{\F}{{\mathbf{F}}}
\newcommand{\X}{{\mathbf{X}}}
\newcommand{\Y}{{\mathbf{Y}}}
\newcommand{\Z}{{\mathbf{Z}}}
\renewcommand{\P}{{\mathbf{P}}}
\newcommand{\Q}{{\mathbf{Q}}}
\newcommand{\R}{{\mathbf{R}}}
\newcommand{\BB}{\mathcal{B}}
\newcommand{\HH}{\mathcal{H}}
\newcommand{\Zz}{\mathbb{Z}}
\newcommand{\Rr}{\mathbb{R}}
\newtheorem{theorem}{Theorem}[section]
\newtheorem{conjecture}[theorem]{Conjecture}
\newtheorem{proposition}[theorem]{Proposition}
\newtheorem{lemma}[theorem]{Lemma}
\newtheorem{corollary}[theorem]{Corollary}
\theoremstyle{definition}
\newtheorem{definition}[theorem]{Definition}
\newtheorem{example}[theorem]{Example}
\newtheorem{question}[theorem]{Question}
\newcommand{\includefigure}[3]{{
  \begin{center}
  \resizebox{#1}{#2}{\includegraphics{{#3}}}
  \end{center}}}
\author{Jeremy L.\ Martin}
\address{Department of Mathematics\\ University of Kansas\\ Lawrence, KS 66045}
\email{jmartin@math.ku.edu}
\author{Jennifer D.\ Wagner}
\address{Department of Mathematics and Statistics\\ Washburn University\\ Topeka, KS 66621}
\email{jennifer.wagner1@washburn.edu}
\title{On the Spectra of Simplicial Rook Graphs}
\date{April 22, 2014}
\thanks{First author supported in part by a Simons Foundation Collaboration Grant and by National Security Agency grant no.~H98230-12-1-0274.}
\subjclass[2010]{05C50} 
\keywords{graph, simplicial rook graph, integral, spectrum, eigenvalues}
\begin{document}
\begin{abstract}
The \emph{simplicial rook graph} $\SR(d,n)$ is the graph whose vertices are the lattice points in the $n$th dilate of the standard simplex in $\mathbb{R}^d$, with two vertices adjacent if they differ in exactly two coordinates.  We prove that the adjacency and Laplacian matrices of $\SR(3,n)$ have integral spectrum for every~$n$.  The proof proceeds by calculating an explicit eigenbasis.
We conjecture that $\SR(d,n)$ is integral for all~$d$ and~$n$, and present evidence in
support of this conjecture.  For $n<\binom{d}{2}$, the evidence indicates that the smallest eigenvalue of the adjacency matrix is $-n$,
and that the corresponding eigenspace has dimension given by the Mahonian
numbers, which enumerate permutations by number of inversions.
\end{abstract}
\maketitle

\section{Introduction}

Let $d$ and $n$ be nonnegative integers.  The \emph{simplicial rook
graph} $\SR(d,n)$ is the graph with vertices
  $$V(d,n):= \left\{x=(x_1,\dots,x_d) \st 0\leq x_i\leq n,\ \sum_{i=1}^d x_i=n\right\}$$
 with two vertices adjacent if they
agree in all but two coordinates.  This graph has $N=\binom{n+d-1}{d-1}$
vertices and is regular of degree $\delta=(d-1)n$.  Geometrically, let
$\Delta^{d-1}$ denote the standard
simplex in $\Rr^d$ (i.e., the convex hull of the standard basis vectors $\e_1,\dots,\e_d$)
and let $n\Delta^{d-1}$ denote its $n^{th}$ dilate (i.e., the convex hull of $n\e_1,\dots,n\e_d$).
Then $V(d,n)$ is the set of lattice points in $n\Delta^{d-1}$,
with two points adjacent if their difference is a multiple of $\e_i-\e_j$ for some $i,j$.
  Thus the
independence number of $\SR(d,n)$ is the maximum number of nonattacking rooks that
can be placed on a simplicial chessboard with $n+1$ ``squares'' on each side.
Nivasch and Lev \cite{NAQ} and Blackburn, Paterson and Stinson \cite{Dots}
showed independently that for $d=3$, this independence number is
$\lfloor(2n+3)/3\rfloor$.

\begin{figure}[ht]
\includefigure{1.8in}{1.8in}{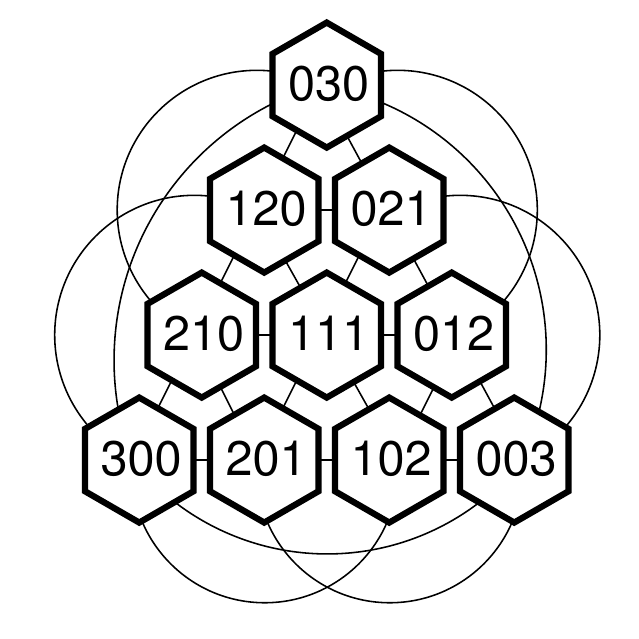}
\caption{The graph $\SR(3,3)$.
\label{\SR-figure}}
\end{figure}

As far as we can tell, the class of simplicial rook graphs has not been studied
before.
For some small values of the parameters, $\SR(d,n)$ is a well-known
graph: $\SR(2,n)$ and $\SR(d,1)$ are complete of orders $n+1$ and~$d$ respectively;
$\SR(3,2)$ is isomorphic to the octahedron; and $\SR(d,2)$ is isomorphic to
the Johnson graph $J(d+1,2)$.
On the other hand, simplicial rook graphs
are not in general vertex-transitive, strongly regular or distance-regular, nor are they line graphs or noncomplete
extended $p$-sums (in the sense of \cite[p.~55]{Cvetkovic}).  They are
also not to be confused with the \emph{simplicial grid graph}, in which
two vertices are adjacent only if their difference vector is exactly $\e_i-\e_j$ (as opposed
to some scalar multiple)
nor with the \emph{triangular graph} $T_n$, which is the line graph of $K_n$
\cite[p.23]{Spectra}, \cite[\S10.1]{GodRoy}.

Let $G$ be a simple graph on vertices $[n]=\{1,\dots,n\}$.
The \emph{adjacency matrix} $A=A(G)$ is the $n\x n$ symmetric matrix
whose $(i,j)$ entry is 1 if $ij$ is an edge, 0 otherwise.
The \emph{Laplacian matrix} is $L=L(G)=D-A$, where~$D$ is
the diagonal matrix whose $(i,i)$ entry is the degree of vertex~$i$.
The graph~$G$ is said to be \emph{integral} (resp.\ \emph{Laplacian integral})
if all eigenvalues of $A$ (resp.\ $L$) are integers. 
If~$G$ is regular of degree~$\delta$, then these conditions
are equivalent, since  every eigenvector
of $A$ with eigenvalue~$\lambda$ is an eigenvector of $L$ with eigenvalue $\delta-\lambda$.  

We can now state our main theorem.

\begin{theorem} \label{main-theorem}
For every $n\geq 1$, the simplicial rook graph $\SR(3,n)$ is integral
and Laplacian integral, with eigenvalues as follows:

\begin{center}
\begin{tabular}{cccc}
\multicolumn{4}{c}{\boldmath\bf If $n=2m+1$ is odd:}\\
{\boldmath\bf Eigenvalue of~$A$} & {\boldmath\bf Eigenvalue of~$L$} & {\bf Multiplicity} & {\bf Eigenvector}\\ \hline
$-3$ & $4m+5=2n+3$ & $\padup\binom{2m}{2}$ & $\H_{a,b,c}$\\
$-2,-1,\dots,m-3$ & $3m+5\dots,4m+4$ & $3$ & $\P_k$\\
$m-1$ & $3m+3$ & $2$ & $\R$\\
$m, \dots, 2m-1=n-2$ & $2m+3\dots,3m+2$ & $3$ & $\Q_k$\\
$4m+2=2n$ & $0$ & $1$ & $\J$
\end{tabular}
\end{center}\bigskip

\begin{center}
\begin{tabular}{cccc}
\multicolumn{4}{c}{\boldmath\bf If $n=2m$ is even:}\\
{\boldmath\bf Eigenvalue of~$A$} & {\boldmath\bf Eigenvalue of~$L$} & {\bf Multiplicity} & {\bf Eigenvector}\\ \hline
$-3$ & $4m+3=2n+3$ & $\padup\binom{2m-1}{2}$ & $\H_{a,b,c}$\\
$-2,-1,\dots,m-4$ & $3m+4,\dots,4m+2$ & $3$ & $\P_k$\\
$m-3$ & $3m+3$ & $2$ & $\R$\\
$m-1,\dots,2m-2=n-2$ & $2m+2,\dots,3m+1$ & $3$ & $\Q_k$\\
$4m=2n$ & $0$ & $1$ & $\J$
\end{tabular}
\end{center}
\end{theorem}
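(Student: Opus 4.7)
Since $\SR(3,n)$ is $2n$-regular, the all-ones vector $\J$ is automatically an eigenvector of~$A$ with eigenvalue $2n$, leaving $N-1$ dimensions to construct. My strategy is to exploit the natural action of $\Sym_3$ on $\SR(3,n)$ by permuting coordinates, which commutes with~$A$: every eigenspace thus decomposes into $\Sym_3$-isotypic components, and the multiplicities $1$, $2$, $3$ appearing in the tables should correspond to contributions from the trivial representation ($\J$), the two-dimensional standard representation ($\R$), and sums such as trivial$\,\oplus\,$standard or sign$\,\oplus\,$standard (the families $\P_k$ and $\Q_k$), respectively. This reduces the problem to finding a few families of representative eigenvectors and then taking $\Sym_3$-orbits.

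The $-3$ eigenspace should be tackled first. Its multiplicity $\binom{n-1}{2}$ equals exactly the number of interior lattice points of $n\Delta^{2}$, i.e.\ triples $(a,b,c)$ with $a,b,c\geq 1$ and $a+b+c=n$. This strongly suggests an eigenvector $\H_{a,b,c}$ of small support attached to each interior point. I would posit an ansatz supported on a handful of vertices near $(a,b,c)$, symmetric under the stabilizer of $(a,b,c)$ in $\Sym_3$, and then verify
\begin{equation*}
\sum_{u\sim v} \H_{a,b,c}(u) \;=\; -3\,\H_{a,b,c}(v)
\end{equation*}
at every vertex~$v$. The catch --- and this will be the main technical difficulty --- is that neighbors $u\sim v$ sweep out the three entire rook lines through~$v$, not just adjacent lattice points, so the ansatz must be designed so that the contributions of support points along each rook line telescope to cancel; engineering this cancellation is what forces the eigenvalue to be $-3$. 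Cases where $(a,b,c)$ lies near the boundary of $n\Delta^{2}$, where the rook lines are shortened, will likely require separate verification from the generic interior case.

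For the remaining eigenvalues I would search for eigenvectors of ``axial'' type, depending essentially on one of the three coordinates and producing the cyclic images $f(x_1)$, $f(x_2)$, $f(x_3)$ under $\Sym_3$. Writing out the adjacency equation should reduce the problem to a one-variable linear operator whose spectrum matches the two arithmetic progressions of eigenvalues listed for $\P_k$ and $\Q_k$, with each eigenvalue generically giving three linearly independent cyclic images. The eigenvector~$\R$ of multiplicity~$2$ should arise separately as a ``purely antisymmetric'' eigenvector transforming in the standard representation of $\Sym_3$ --- for instance a linear combination of differences $x_i-x_j$ weighted by a suitable polynomial --- with eigenvalue sitting in the gap between the $\P_k$ and $\Q_k$ ranges.

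The last step is bookkeeping. A short arithmetic verification (performed separately for $n=2m+1$ and $n=2m$) confirms that the listed multiplicities sum to $N=\binom{n+2}{2}$, and linear independence of the entire basis follows from distinct eigenvalues across families, from the $\Sym_3$-isotypic decomposition within shared eigenspaces, and from the disjoint supports of the $\H_{a,b,c}$. Laplacian integrality is then automatic by the regularity observation from the introduction. As indicated, I expect the principal obstacle to be the explicit design of the $\H_{a,b,c}$ and the verification of their eigenvector property via the telescoping cancellation along rook lines.
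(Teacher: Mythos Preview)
Your plan follows essentially the same route as the paper, but several of your concrete expectations are off and would cause trouble if you tried to execute them as stated.

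First, the hexagon eigenvectors: the paper's $\H_{a,b,c}$ is the \emph{alternating}-sign characteristic vector of the six lattice points $(a\pm1,b\mp1,c)$, $(a,b\pm1,c\mp1)$, $(a\pm1,b,c\mp1)$ surrounding $(a,b,c)$; it transforms by the sign character of the $\Sym_3$-stabilizer, not the trivial one, and it is precisely this sign alternation that makes each rook line through a support vertex hit exactly two support points with opposite signs (so the ``telescoping'' is a single cancellation, not a long sum). Because $(a,b,c)$ is strictly interior, the hexagon always lies entirely inside $n\Delta^2$, so there are no boundary cases to check. More seriously, your linear-independence argument fails: the supports of the $\H_{a,b,c}$ are \emph{not} disjoint (neighboring hexagons overlap in two or three vertices). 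The paper instead observes that the lexicographically leading term of $\H_{a,b,c}$ is $\e_{a-1,b,c+1}$, and these are distinct.

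Second, the non-hexagon eigenvectors are not of the ``axial'' form $f(x_1)$. The paper's key organizational step is to introduce the line characteristic vectors $\X_i=\sum_{j+k=n-i}\e_{ijk}$ (and $\Y_j,\Z_k$), prove that their span is exactly the orthogonal complement of the hexagon space (dimension $3n$), and then search for the remaining eigenvectors as explicit linear combinations of the $\X_i,\Y_j,\Z_k$. The formulas for $\P_k$ and $\Q_k$ mix all three families with coefficients linear in $i$, and verifying the eigenvector equation is a direct (if lengthy) computation using the closed forms for $A\X_i$, $A\Y_j$, $A\Z_k$; each is fixed by a transposition and has $\Sym_3$-orbit of dimension~$3$. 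So your representation-theoretic framing is correct, but the reduction is to the $3n$-dimensional line-vector space rather than to a one-variable operator.
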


Integrality and Laplacian integrality
typically arise from tightly controlled combinatorial structure in
special families of graphs, including complete graphs, complete bipartite
graphs and hypercubes (classical; see, e.g., \cite[\S5.6]{EC2}), Johnson graphs \cite{Johnson}, Kneser graphs \cite{Lovasz}
and threshold graphs \cite{Merris}.  (General references on graph
eigenvalues and related topics include
\cite{SurveyIG,Spectra,Cvetkovic,GodRoy}.)  For simplicial
rook graphs,  lattice geometry provides this combinatorial structure.
To prove Theorem~\ref{main-theorem}, we construct a basis of
$\Rr^{\binom{n+2}{2}}$ consisting of eigenvectors of~$A(SR(3,n))$, as indicated
in the tables above.  The basis vectors $\H_{a,b,c}$ for the largest
eigenspace (Prop.~\ref{hex-prop}) are signed characteristic vectors for hexagons centered at
lattice points in the interior of $n\Delta^3$ (see
Figure~\ref{hxyz-figure}).  The other eigenvectors $\P_k,\R,\Q_k$
 (Props.~\ref{dim-two-eigenspace}, \ref{appalling-calculation-one}, \ref{appalling-calculation-two})
are most easily
expressed as certain sums of characteristic vectors of lattice lines.

Theorem~\ref{main-theorem}, together with Kirchhoff's matrix-tree theorem
\cite[Lemma~13.2.4]{GodRoy} implies the following formula for the
number of spanning trees of~$\SR(d,n)$.

\begin{corollary} \label{spanning-tree-count}
The number of spanning trees of $\SR(3,n)$ is
\[\begin{cases}
\displaystyle\frac{32(2n+3)^{\binom{n-1}{2}} \prod\limits_{a=n+2}^{2n+2} a^3}{3(n+1)^2(n+2)(3n+5)^3}
& \text{ if $n$ is odd,}
\\\\
\displaystyle\frac{32(2n+3)^{\binom{n-1}{2}} \prod\limits_{a=n+2}^{2n+2} a^3} {3(n+1)(n+2)^2(3n+4)^3}
& \text{ if $n$ is even.}
\end{cases}\]
\end{corollary}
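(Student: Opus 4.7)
The plan is to apply Kirchhoff's matrix-tree theorem in its eigenvalue form: for any connected graph on $N$ vertices, the number of spanning trees equals $\frac{1}{N}\prod\mu$, where the product runs over the nonzero Laplacian eigenvalues with multiplicity. Since Theorem~\ref{main-theorem} supplies the complete list of Laplacian eigenvalues of $\SR(3,n)$ with multiplicity, and in particular shows that $0$ is a simple eigenvalue (so $\SR(3,n)$ is connected), the corollary reduces to an algebraic simplification.

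For $n=2m+1$ odd, reading from the first table, the nonzero Laplacian eigenvalues with multiplicities are: $2n+3=4m+5$ with multiplicity $\binom{n-1}{2}=\binom{2m}{2}$; each of $3m+5,\dots,4m+4$ with multiplicity $3$; the value $3m+3$ with multiplicity $2$; and each of $2m+3,\dots,3m+2$ with multiplicity $3$. Their product is
\[
P=(2n+3)^{\binom{n-1}{2}}\cdot\prod_{j=3m+5}^{4m+4} j^{3}\cdot(3m+3)^{2}\cdot\prod_{j=2m+3}^{3m+2} j^{3}.
\]
The key observation is that the two cube-products between them cover the entire range $[n+2,2n+2]=[2m+3,4m+4]$ except for the two values $3m+3$ and $3m+4$, so merging and compensating yields
\[
P=(2n+3)^{\binom{n-1}{2}}\cdot\frac{\prod_{a=n+2}^{2n+2}a^{3}}{(3m+3)(3m+4)^{3}}.
\]
Substituting $m=(n-1)/2$ gives $3m+3=\tfrac{3(n+1)}{2}$ and $3m+4=\tfrac{3n+5}{2}$; dividing $P$ by $N=\binom{n+2}{2}=\tfrac{(n+1)(n+2)}{2}$ and simplifying produces the odd-$n$ formula.

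The even case $n=2m$ is entirely parallel. The second table again shows two blocks of multiplicity-$3$ eigenvalues flanking a single multiplicity-$2$ eigenvalue at $3m+3$, but the Laplacian value skipped between the blocks is now $3m+2$ rather than $3m+4$. After the analogous merge one gets
\[
P=(2n+3)^{\binom{n-1}{2}}\cdot\frac{\prod_{a=n+2}^{2n+2}a^{3}}{(3m+3)(3m+2)^{3}},
\]
and substituting $3m+3=\tfrac{3(n+2)}{2}$, $3m+2=\tfrac{3n+4}{2}$, then dividing by $N$, gives the even-$n$ formula.

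No serious obstacle arises; the proof is bookkeeping. The only point requiring care is identifying the ``gap'' between the two blocks of multiplicity-$3$ eigenvalues, so that $(3m+3)^{2}$ correctly replaces $(3m+3)^{3}(3m+4)^{3}$ (odd case) or $(3m+3)^{3}(3m+2)^{3}$ (even case) when the two ranges are merged into $\prod_{a=n+2}^{2n+2}a^{3}$.
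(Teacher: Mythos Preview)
Your proof is correct and follows exactly the approach the paper indicates: the corollary is stated as an immediate consequence of Theorem~\ref{main-theorem} together with Kirchhoff's matrix-tree theorem, and your computation carries out precisely this deduction. The paper does not supply any further argument beyond that one-line justification, so your detailed bookkeeping (identifying the missing values $3m+4$ in the odd case and $3m+2$ in the even case, and the multiplicity-$2$ value $3m+3$ in both) is a faithful expansion of what the authors left implicit.
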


Based on experimental evidence gathered using Sage \cite{Sage}, we make
the following conjecture:

\begin{conjecture} \label{all-d-n}
The graph $\SR(d,n)$ is integral for all $d$ and $n$.
\end{conjecture}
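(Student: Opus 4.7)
The approach I would pursue proceeds in three stages, exploiting the symmetric group action present in $\SR(d,n)$ and working toward an explicit eigenbasis extending the $d=3$ construction of Theorem~\ref{main-theorem}.

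First, observe that $\Sym_d$ acts on $V(d,n)$ by permuting coordinates, and this action commutes with the adjacency matrix~$A$. Hence $\Rr^{V(d,n)}$ decomposes as an orthogonal direct sum of $\Sym_d$-isotypic components, each stable under~$A$. I would begin by computing this decomposition via character theory: for each $\sigma\in\Sym_d$, the fixed points of $\sigma$ on $V(d,n)$ are the compositions of $n$ whose values are constant on each cycle of~$\sigma$, which can be enumerated in closed form, and pairing against the irreducible characters of $\Sym_d$ extracts the multiplicity of each partition $\lambda\vdash d$. This reduces the eigenvalue problem to analyzing $A$ on each multiplicity space separately, and the hope is that the restricted operators have enough combinatorial meaning to be attacked one at a time.

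The Mahonian evidence for the smallest eigenvalue is a strong clue about how to proceed. Since $[d]_q!=\prod_{i=1}^{d}(1+q+\cdots+q^{i-1})$ enumerates permutations by $\inv$ and is the Hilbert series of the coinvariants of $\Sym_d$, my plan is to look for eigenvectors of eigenvalue $-n$ (when $n<\binom{d}{2}$) indexed by permutations $w$ with $\inv(w)=n$, constructed as signed sums over $\Sym_d$-orbits of lattice configurations in $n\Delta^{d-1}$. In the $d=3$ case these orbits are the hexagons $\H_{a,b,c}$ of Prop.~\ref{hex-prop}; the natural guess in general is alternating sums over orbits of permutohedral neighborhoods of interior lattice points $(a_1,\dots,a_d)$, obtained by applying $\Sym_d$ to a point of the form $(a_1+1,a_2-1,a_3,\dots,a_d)$ and twisting by the sign character. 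Verifying that these vectors lie in the kernel of $A+n\,\mathrm{Id}$, and that they are linearly independent in the Mahonian count, would be the bulk of this stage.

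The remaining eigenspaces should, by analogy with Props.~\ref{dim-two-eigenspace}, \ref{appalling-calculation-one}, and \ref{appalling-calculation-two}, be spanned by combinations of characteristic vectors of lower-dimensional lattice subsimplices (lattice lines when $d=3$), with one family attached to each Young subgroup $\Sym_{\lambda_1}\times\cdots\times\Sym_{\lambda_k}$. The main obstacle I anticipate is the last step: isotypic decomposition by itself does not force integrality of the restricted operators, and a genuinely new combinatorial input is needed. The most promising angle seems to be to find an auxiliary association scheme on $V(d,n)$ that refines the one generated by $A$---perhaps a ``simplicial Johnson scheme'' whose relations are $\Sym_d$-orbits of difference vectors $x-y$---so that $A$ becomes a nonnegative integer combination of primitive idempotents with integer eigenvalues. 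Without such a structural enrichment, I would expect even the $d=4$ case to demand the kind of explicit case analysis the authors carry out here for $d=3$.
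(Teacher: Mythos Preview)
The statement you are attempting to prove is Conjecture~\ref{all-d-n}, and the paper does \emph{not} prove it: it is stated explicitly as a conjecture, supported only by the $d=3$ theorem, the partial constructions of Section~\ref{arbitrary-d}, and Sage computations. There is therefore no ``paper's own proof'' to compare against.

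Your proposal is not a proof either, and to your credit you say so: you identify the isotypic decomposition under $\Sym_d$ as a natural first move, you conjecture the right shape for the $(-n)$-eigenspace, and then you explicitly flag that ``isotypic decomposition by itself does not force integrality'' and that a structural enrichment (an association scheme or similar) would be needed. That honest assessment is correct. The $\Sym_d$-decomposition reduces the problem but does not by itself yield integer eigenvalues on each isotypic block, and no such association scheme is known.

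It is worth noting that portions of your strategy are already carried out in the paper. Your ``alternating sums over permutohedral neighborhoods'' are exactly the permutohedron vectors $\H_{p,w}$ of Proposition~\ref{permutohedron-vector}, which give eigenvalue $-\binom{d}{2}$ (not $-n$) when $n\geq\binom{d}{2}$. Your proposed $(-n)$-eigenvectors indexed by permutations $w$ with $\inv(w)=n$ are precisely the partial permutohedron vectors $\F_\pi$ of Theorem~\ref{small-n-theorem}, and the paper proves they are linearly independent eigenvectors in the Mahonian count. But even with those constructions in hand, the paper cannot show that $-n$ is the smallest eigenvalue, nor account for the full spectrum, nor resolve Conjecture~\ref{span-ortho} on the span of the lattice-line vectors $\X^{(i,j)}_\alpha$. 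Your third stage --- extending the $\P_k,\Q_k,\R$ families via Young subgroups --- has no counterpart in the paper and would be genuinely new, but you give no mechanism for producing it, and the paper's evidence (e.g., the failure of tight interlacing for equitable partitions noted in the final section) suggests that the remaining eigenspaces do not organize as cleanly as one might hope.

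In short: the gap is real and is the same gap the authors face. What you have written is a reasonable research plan, not a proof.
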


We discuss the general case in Section~\ref{arbitrary-d}.
The construction of hexagon vectors generalizes as follows: for each
permutohedron whose vertices are lattice points in $n\Delta^{d-1}$,
its signed characteristic vector is an eigenvector of eigenvalue
$-\binom{d}{2}$ (Proposition~\ref{permutohedron-vector}).  This is in
fact the smallest eigenvalue of $\SR(d,n)$ when $n\geq\binom{d}{2}$.
Moreover, these eigenvectors are linearly independent and, for
fixed~$d$, account for ``almost all'' of the spectrum as $n\to\infty$,
in the sense that
\[\lim_{n\to\infty}\frac{\dim\text{(span of permutohedron eigenvectors)}}{|V(d,n)|}=1.\]

When $n<\binom{d}{2}$, the simplex $n\Delta^{d-1}$ is too small to
contain any lattice permutohedra.  On the other hand, the signed
characteristic vectors of \emph{partial permutohedra} (i.e.,
intersections of lattice permutohedra with $\SR(d,n)$) are
eigenvectors with eigenvalue~$-n$.  Experimental evidence indicates
that this is in fact the smallest eigenvalue of $A(d,n)$, and that
these partial permutohedra form a basis for the corresponding
eigenspace.  Unexpectedly, its dimension appears to be the
\emph{Mahonian number} $M(d,n)$ of permutations in $\Sym_d$ with
exactly $n$ inversions (sequence \#A008302 in Sloane \cite{OEIS}).
In Section~\ref{Ferrers-section}, we construct a family of eigenvectors
by placing rooks (ordinary rooks, not simplicial rooks!)
on Ferrers boards.

\section{Proof of the Main Theorem} \label{proof-main}

We begin by reviewing some basic algebraic graph theory; for a general reference,
see, e.g., \cite{GodRoy}.  Let $G=(V,E)$ be a simple undirected graph with
$N$ vertices.  The \emph{adjacency matrix} $A(G)$ is the $N\x N$ matrix whose $(i,j)$ entry
is 1 if vertices~$i$ and $j$ are adjacent, 0 otherwise.  The \emph{Laplacian matrix} is
$L(G)=D(G)-A(G)$, where $D(G)$ is the diagonal matrix of vertex degrees.  These are both real symmetric
matrices, so they are diagonalizable, with real eigenvalues, and eigenspaces with different eigenvalues are orthogonal
\cite[\S8.4]{GodRoy}.

\begin{proposition} \label{regular}
The graph $\SR(d,n)$ has $\binom{n+d-1}{d-1}$ vertices and
is regular of degree $(d-1)n$.  In particular, its adjacency and Laplacian matrices have the same eigenvectors.
\end{proposition}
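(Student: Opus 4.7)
The plan is to verify each of the three assertions directly from the definition of $\SR(d,n)$. For the vertex count, $V(d,n)$ is by definition the set of weak compositions of $n$ into $d$ parts, so a standard stars-and-bars argument immediately gives $|V(d,n)|=\binom{n+d-1}{d-1}$. No subtlety here.

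For the regularity claim, I would fix a vertex $x=(x_1,\dots,x_d)$ and count its neighbors pair-by-pair. A neighbor differs from $x$ in exactly two coordinates $i<j$; such a neighbor is obtained by replacing $(x_i,x_j)$ with $(y_i,y_j)$, where $y_i,y_j\geq 0$, $y_i+y_j=x_i+x_j$, and $(y_i,y_j)\neq(x_i,x_j)$. There are $x_i+x_j+1$ choices for $(y_i,y_j)$, minus one to exclude $x$ itself, giving $x_i+x_j$ neighbors from the pair $\{i,j\}$. Summing,
\[
\deg(x)=\sum_{1\leq i<j\leq d}(x_i+x_j)=(d-1)\sum_{i=1}^d x_i=(d-1)n,
\]
which is independent of $x$, proving regularity of degree $(d-1)n$.

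The final sentence is then immediate: for a $\delta$-regular graph the degree matrix is $D=\delta I$, so $L=\delta I-A$, and $Av=\lambda v$ if and only if $Lv=(\delta-\lambda)v$; the adjacency and Laplacian matrices thus share eigenvectors, with eigenvalues shifted by $\delta$. No step poses any real obstacle here; the only thing to be careful about is the easy-to-miss $(y_i,y_j)\neq(x_i,x_j)$ exclusion in the neighbor count.
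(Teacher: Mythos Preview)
Your proof is correct and follows essentially the same approach as the paper: stars-and-bars for the vertex count, and the pairwise neighbor count $\sum_{i<j}(x_i+x_j)=(d-1)n$ for regularity. Your version is in fact slightly more careful, spelling out the $(y_i,y_j)\neq(x_i,x_j)$ exclusion and the $L=\delta I-A$ consequence explicitly, whereas the paper leaves these implicit.
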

\begin{proof}
Counting vertices is the classic ``stars-and-bars'' problem (with $n$ stars and $d-1$ bars).
For each $x\in V(d,n)$ and each pair of coordinates $i,j$, there are $x_i+x_j$ other vertices
that agree with $x$ in all coordinates but $i$ and $j$.  Therefore, the degree of $x$ is
$\sum_{1\leq i<j\leq n} (x_i+x_j) = (d-1)\sum_{i=1}^n x_i = (d-1)n$.
\end{proof}

The matrices $A(d,n)$ and $L(d,n)$ act on the vector space
$\Rr^N$ with standard basis $\{\e_{ijk} \st
(i,j,k)\in V(d,n)\}$.  We will sometimes consider the standard basis vectors
as ordered lexicographically, for the purpose of showing that a collection
of vectors is linearly independent.

In the rest of this section, we focus exclusively on the case $d=3$, and regard $n$ as fixed.  We fix $N:=\binom{n+2}{2}$,
the number of vertices of $\SR(3,n)$, and abbreviate $A=A(3,n)$.

\subsection{Basic linear algebra calculations}

Define
\begin{align*}
\X_i&:=\sum_{j+k=n-i} \e_{ijk}, & \J &:= \sum_{i+j+k=n} \e_{ijk}, \\
\Y_j&:=\sum_{i+k=n-j} \e_{ijk}, & \BB_n &:= \{\X_i,\Y_i,\Z_i \st 0\leq i\leq n\},\\
\Z_k&:=\sum_{i+j=n-k} \e_{ijk}, & \BB'_n &:= \{\X_i,\Y_i,\Z_i \st 0\leq i\leq n-1\}.
\end{align*}
The vectors $\X_i,\Y_j,\Z_k$ are the characteristic vectors of lattice lines in
$n\Delta^2$; see Figure~\ref{hxyz-figure}.  Note that the symmetric group $\Sym_3$
acts on $\SR(3,n)$ (hence on each of its eigenspaces)
by permuting the coordinates of vertices.

\begin{lemma} \label{XYZ-relations}
We have
\[\J = \sum_{i=0}^n \X_i = \sum_{i=0}^n \Y_i = \sum_{i=0}^n \Z_i
\qqandqq
n\J = \sum_{i=0}^n i(\X_i+\Y_i+\Z_i).
\]
\end{lemma}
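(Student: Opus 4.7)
The plan is to verify both identities by direct unpacking of the definitions; no clever idea is needed, just careful bookkeeping of indices.

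For the first chain of equalities, I would observe that
\[
\sum_{i=0}^n \X_i \;=\; \sum_{i=0}^n \sum_{\substack{j,k\geq 0\\ j+k=n-i}} \e_{ijk}
\;=\; \sum_{\substack{i,j,k\geq 0\\ i+j+k=n}} \e_{ijk} \;=\; \J,
\]
because as $i$ ranges over $\{0,\dots,n\}$ and $(j,k)$ over nonnegative solutions to $j+k=n-i$, the triple $(i,j,k)$ ranges exactly once over $V(3,n)$. The analogous computations for $\sum \Y_j$ and $\sum \Z_k$ are identical after relabeling (this is where the $\Sym_3$-symmetry noted just before the lemma makes the argument transparent: $\X_i, \Y_i, \Z_i$ are the images of one another under the coordinate permutations, and $\J$ is $\Sym_3$-invariant).

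For the second identity, I would similarly compute
\[
\sum_{i=0}^n i\,\X_i \;=\; \sum_{\substack{i+j+k=n}} i\,\e_{ijk},
\qquad
\sum_{j=0}^n j\,\Y_j \;=\; \sum_{\substack{i+j+k=n}} j\,\e_{ijk},
\qquad
\sum_{k=0}^n k\,\Z_k \;=\; \sum_{\substack{i+j+k=n}} k\,\e_{ijk},
\]
and add the three. The coefficient of each $\e_{ijk}$ becomes $i+j+k=n$, yielding $n\J$.

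There is no genuine obstacle here; the only minor care needed is to make sure the ranges of summation match (each $\X_i$ is nonzero precisely when $0\leq i\leq n$, which is exactly the range in the sums, so no boundary terms are lost). Both identities will follow by comparing the coefficient of each standard basis vector $\e_{ijk}$ on the two sides.
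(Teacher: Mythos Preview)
Your proof is correct and follows essentially the same approach as the paper: both identities are verified by expanding in the standard basis and comparing coefficients of each $\e_{ijk}$. The paper's proof is terser (calling the first assertion ``immediate'' and noting for the second that the coefficient of each $\e_{ijk}$ is $i+j+k=n$), but the content is identical to what you wrote.
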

\begin{proof}
The first assertion is immediate.  For the second, when we expand the sum in terms of the $\e_{ijk}$,
the coefficient on each $\e_{ijk}$ is $i+j+k=n$.
\end{proof}

\begin{proposition}
For every $i,j,k$, we have
\begin{subequations}
\begin{align}
A \e_{ijk} &= \X_i+\Y_j+\Z_k-3\e_{ijk},\label{Aeijk}\\
A \J &= 2n\J, \label{AJ}\\
A\X_i &= (n-i-2) \X_i + \sum_{j=0}^{n-i}\Big[\Y_j+\Z_j\Big], \label{AX}\\
A\Y_i &= (n-i-2) \Y_i + \sum_{j=0}^{n-i}\Big[\X_j+\Z_j\Big], \label{AY}\\
A\Z_i &= (n-i-2) \Z_i + \sum_{j=0}^{n-i}\Big[\X_j+\Y_j\Big]. \label{AZ}
\end{align}
\end{subequations}
\end{proposition}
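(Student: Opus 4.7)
The plan is to prove (\ref{Aeijk}) first as a direct neighbor-counting statement, then to deduce (\ref{AJ}) from $2n$-regularity, and finally to derive (\ref{AX}) by summing (\ref{Aeijk}) along the lattice line $\X_i$. Identities (\ref{AY}) and (\ref{AZ}) will follow from (\ref{AX}) by invoking the $\Sym_3$-action permuting coordinates, which commutes with $A$.

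For (\ref{Aeijk}), I would argue that a vertex $(i',j',k')\neq(i,j,k)$ is adjacent to $(i,j,k)$ in $\SR(3,n)$ exactly when it agrees with $(i,j,k)$ in one coordinate, i.e., when it lies on precisely one of the three lattice lines $\X_i$, $\Y_j$, $\Z_k$. The vertex $(i,j,k)$ itself lies on all three, so the sum $\X_i+\Y_j+\Z_k$ picks up each neighbor of $(i,j,k)$ once and picks up $(i,j,k)$ itself three times; subtracting $3\e_{ijk}$ therefore produces exactly $A\e_{ijk}$.

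For (\ref{AJ}), since $\J$ is the all-ones vector and $\SR(3,n)$ is $2n$-regular by Proposition~\ref{regular}, we immediately have $A\J=2n\J$.

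For (\ref{AX}), I would sum (\ref{Aeijk}) over the $n-i+1$ lattice points comprising the line $\X_i$. The $\X_i$ summands on the right contribute $(n-i+1)\X_i$; the $-3\e_{ijk}$ summands contribute $-3\X_i$; and as $(j,k)$ runs over all pairs with $j+k=n-i$, the $\Y_j$ and $\Z_k$ summands produce $\sum_{j=0}^{n-i}\Y_j$ and $\sum_{k=0}^{n-i}\Z_k$, respectively. Collecting these yields (\ref{AX}). Then (\ref{AY}) and (\ref{AZ}) follow by applying the coordinate transpositions $(1\,2)$ and $(1\,3)$ in $\Sym_3$, which send $\X_i$ to $\Y_i$ and $\Z_i$ and commute with the adjacency action. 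This is essentially a bookkeeping exercise with no real obstacle; the only spot I would take care over is the $-3$ correction in (\ref{Aeijk}), since $(i,j,k)$ is the unique common point of all three lattice lines through it, and this very coefficient will later reappear as the most negative eigenvalue of $A$ attached to the hexagon eigenvectors $\H_{a,b,c}$.
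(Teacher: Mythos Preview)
Your proof is correct and follows essentially the same approach as the paper: the paper also obtains (\ref{Aeijk}) directly from the definition, derives (\ref{AJ}) from $2n$-regularity, and proves (\ref{AX}) by summing (\ref{Aeijk}) over the lattice line $\X_i$, with (\ref{AY}) and (\ref{AZ}) handled ``similarly'' (your appeal to the $\Sym_3$-action is just a slightly more explicit way of saying this).
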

\begin{proof}
Formula \eqref{Aeijk} is immediate from the definition of $A$, and \eqref{AJ} follows
because $\SR(3,n)$ is ($2n$)-regular.  For \eqref{AX}, we have
\begin{align*}
A\X_i &= \sum_{j+k=n-i} A\e_{i,j,k} = \sum_{j+k=n-i} [X_i+Y_j+Z_k-3\e_{i,j,k}]\\
&= (n-i+1) X_i - 3\sum_{j+k=n-i}\e_{i,j,k} + \sum_{j+k=n-i}[Y_j+Z_k]\\
&= (n-i-2) X_i + \sum_{j=0}^{n-i}[Y_j+Z_j]
\end{align*}
and \eqref{AY} and \eqref{AZ} are proved similarly.
\end{proof}

For future use, we also record (without proof) some elementary summation formulas.
\begin{lemma} \label{summations-for-appalling-one}
The following summations hold:
\begin{align*}
\sum_{i=k+1}^{n-k-1} \big[ 4i-2n \big] &= 0, & \sum_{i=k+1}^{n-k-1} \big[ 4i-2k-2-n \big] &= (n-2k-1)(n-2k-2),\\
\sum_{i=k+1}^{n-j} \big[ 4i-2n \big] &= 2(n-j-k)(k-j+1), & \sum_{i=k+1}^{n-j} \big[ 4i-2k-2-n \big] &= (n-2j)(n-k-j).
\end{align*}
\end{lemma}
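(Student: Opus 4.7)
\medskip\noindent
\emph{Proof proposal.} Each of the four identities is a finite sum of a linear function of $i$ over a contiguous range of integers, so the plan is to reduce everything to the two elementary formulas
\[
\sum_{i=a}^b 1 \;=\; b-a+1 \qandq \sum_{i=a}^b i \;=\; \tfrac{1}{2}(a+b)(b-a+1),
\]
and then factor. There is no conceptual obstacle; the only ``hard part'' is bookkeeping, which is presumably why the authors omit the computation.

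For the first two identities the range $i\in[k+1,n-k-1]$ has exactly $n-2k-1$ terms and is symmetric about the midpoint $n/2$. Since the summand $4i-2n$ is antisymmetric under $i\mapsto n-i$, the first sum vanishes automatically; no computation is needed beyond noting this symmetry. For the second identity I would write $4i-2k-2-n = (4i-2n) + (n-2k-2)$ and use the first identity plus the count of terms to reduce the sum to $(n-2k-1)(n-2k-2)$ in one line.

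For the third and fourth identities, the range $i\in[k+1,n-j]$ has $n-j-k$ terms, and the key observation is that
\[
\sum_{i=k+1}^{n-j} i \;=\; \tfrac{1}{2}\bigl((k+1)+(n-j)\bigr)(n-j-k) \;=\; \tfrac{1}{2}(n-j-k)(n-j+k+1).
\]
Plugging into $\sum(4i-2n)$, the factor of $n-j-k$ comes out immediately and the bracketed remainder simplifies to $2(k-j+1)$, giving the claimed form. For the fourth identity the same substitution yields the bracket $2(n-j+k+1)-(2k+2+n) = n-2j$, producing $(n-2j)(n-k-j)$.

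In each case the proof is a two-line algebraic manipulation, and the structure of the answer (a product of two linear factors) comes directly from the fact that the number of terms $n-j-k$ (resp.\ $n-2k-1$) already factors out of the telescoped arithmetic progression. Since all four identities follow this identical template, I would present them as a single block of verification rather than four separate arguments.
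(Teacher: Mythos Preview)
Your proposal is correct; each identity is indeed a routine evaluation of an arithmetic progression, and your symmetry shortcut for the first sum together with the decomposition $4i-2k-2-n=(4i-2n)+(n-2k-2)$ for the second are exactly the right moves. The paper itself records these formulas explicitly without proof, so there is nothing to compare against---your verification is more than the paper provides.
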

\begin{lemma} \label{summations-for-appalling-two}
The following summations hold:
\begin{align*}
\sum_{i=k}^{n-k} \big[ 4i-2n\big] &= 0, &
\sum_{i=k}^{n-k} \big[ 4i-3n+2k-2\big] &= -(n-2k+1)(n-2k+2), \\
\sum_{i=k}^{n-j} \big[ 4i-2n\big]  &= 2 (j - k) (-n + j + k - 1),&
\sum_{i=k}^{n-j} \big[ 4i-3n+2k-2\big] &= (2 j + 2 - 4 k + n) (-n + j + k - 1).
\end{align*}
\end{lemma}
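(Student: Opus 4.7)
The plan is to reduce each of the four identities to Gauss's formula for an arithmetic progression. Specifically, for integers $a\leq b$ and any constant $c$,
\[
\sum_{i=a}^{b}(4i+c) \;=\; (b-a+1)\bigl[2(a+b)+c\bigr],
\]
which is immediate from $\sum_{i=a}^{b}i=(b-a+1)(a+b)/2$ together with linearity. With this single formula in hand, the lemma becomes purely mechanical, and my plan is simply to apply it to each of the four sums, then simplify and match the stated right-hand sides.

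First I would dispatch the two identities whose range is $[k,n-k]$. Here $b-a+1=n-2k+1$ and $a+b=n$, so taking $c=-2n$ gives $(n-2k+1)(2n-2n)=0$, which is the first identity. For the second identity I would write $4i-3n+2k-2=(4i-2n)+(2k-n-2)$; by linearity together with the first identity, the sum collapses to $(n-2k+1)(2k-n-2)=-(n-2k+1)(n-2k+2)$.

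The two identities over the range $[k,n-j]$ proceed in exactly the same way. Now $b-a+1=n-j-k+1$ and $a+b=n+k-j$, so the third sum equals $(n-j-k+1)\bigl[2(n+k-j)-2n\bigr]=2(n-j-k+1)(k-j)$, which equals $2(j-k)(-n+j+k-1)$ after moving a sign inside. For the fourth identity I would again split $4i-3n+2k-2=(4i-2n)+(2k-n-2)$; adding $(2k-n-2)(n-j-k+1)$ to the evaluation of the third sum and pulling out the common factor $n-j-k+1$ gives $(n-j-k+1)(4k-2j-n-2)=(2j+2-4k+n)(-n+j+k-1)$.

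The only genuine obstacle is bookkeeping: keeping straight the number of terms $n-2k+1$ versus $n-j-k+1$, the two distinct endpoint sums, and the various sign flips when matching the stated factorizations. There is no conceptual content beyond the arithmetic progression formula, which is presumably why the authors chose to record these identities ``without proof.''
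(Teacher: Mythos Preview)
Your proof is correct; each of the four identities follows exactly as you describe from the arithmetic-progression formula $\sum_{i=a}^{b}(4i+c)=(b-a+1)[2(a+b)+c]$. The paper itself records these identities ``without proof,'' so there is no argument to compare against; your verification is precisely the routine computation the authors omitted.
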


Having completed these preliminaries, we now construct the eigenvectors
of $SR(3,n)$.

\subsection{Hexagon vectors}

Let $(a,b,c)\in V(3,n)$ with $a,b,c>0$.  The corresponding ``hexagon vector''
is defined as
\[
\H_{a,b,c}:=\e_{a-1,b,c+1}-\e_{a,b-1,c+1}+\e_{a+1,b-1,c}-\e_{a+1,b,c-1}+\e_{a,b+1,c-1}-\e_{a-1,b+1,c}.
\]
Geometrically, this is the characteristic vector, with alternating signs,
of a regular lattice hexagon centered at the lattice point $(a,b,c)$
in the interior of $n\Delta^2$ (see Figure~\ref{hxyz-figure}).

\begin{figure}[ht] 
\includefigure{4.9in}{1.44in}{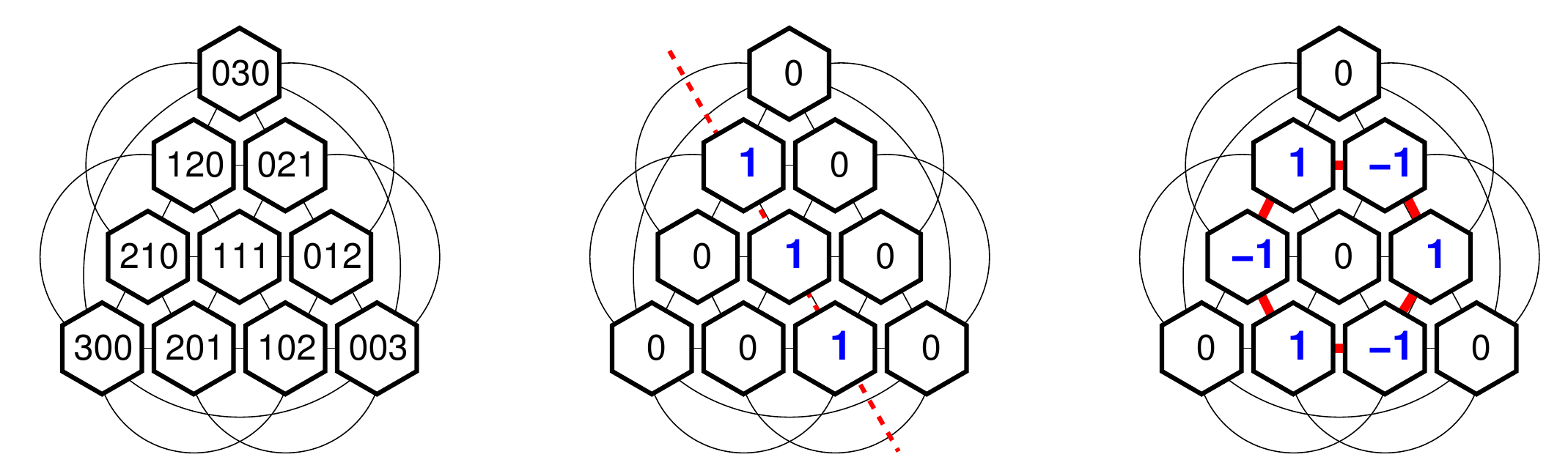}
\caption{\label{hxyz-figure} (left) The graph $\SR(3,3)$.
(center) The vector $\X_1$ and the lattice line it supports.
(right) $\H_{1,1,1}$.}
\end{figure}

\begin{proposition} \label{hex-prop}
The vectors $\{\H_{a,b,c}\st (a,b,c)\in V(d,n),\; a,b,c>0\}$ are linearly independent,
and each one is an eigenvector of $A$ with eigenvalue $-3$.
\end{proposition}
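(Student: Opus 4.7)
The proposition has two parts: $A\H_{a,b,c}=-3\H_{a,b,c}$, and linear independence. Both reduce to careful bookkeeping on the six-vertex support of a hexagon, with the independence step being slightly more delicate. For the eigenvalue claim, the plan is to apply formula \eqref{Aeijk} to each of the six basis vectors in $\H_{a,b,c}$, keeping track of signs. The $-3\e_{ijk}$ pieces collect into $-3\H_{a,b,c}$. The remaining lattice-line pieces of shape $\X_i+\Y_j+\Z_k$ should cancel entirely: the six hexagon vertices have first coordinates $a-1,a,a+1,a+1,a,a-1$ carrying alternating signs $+,-,+,-,+,-$, yielding the telescoping sum
\[\X_{a-1}-\X_a+\X_{a+1}-\X_{a+1}+\X_a-\X_{a-1}=0,\]
and identical identities hold for the $\Y$-family (indexed by second coordinates) and the $\Z$-family (indexed by third coordinates). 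Nothing beyond \eqref{Aeijk} is needed here.

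For linear independence, the plan is a triangularity argument. To each interior center $(a,b,c)$ I associate the distinguished basis vector $\e_{a-1,b,c+1}$, which appears in $\H_{a,b,c}$ with coefficient $+1$; this particular vertex is chosen because $a-1$ is the minimum first coordinate among the six vertices of $\H_{a,b,c}$ and $b$ is the minimum second coordinate among the two vertices attaining first coordinate $a-1$. The assignment $(a,b,c)\mapsto \e_{a-1,b,c+1}$ is injective, so I form the square submatrix $M$ whose rows are the $\H_{a,b,c}$ and whose columns are the distinguished basis vectors, both lex-ordered by $(a,b,c)$, and aim to show $M$ is upper triangular with diagonal $+1$. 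The diagonal is correct by construction. For off-diagonal entries, I expand each of the other five vertices of $\H_{a,b,c}$ as $\e_{a'-1,b',c'+1}$ and solve for the center: case-by-case, $(a',b',c')\in \{(a+1,b-1,c),(a+2,b-1,c-1),(a+2,b,c-2),(a+1,b+1,c-2),(a,b+1,c-1)\}$, and in each case $(a',b',c')>_{\mathrm{lex}}(a,b,c)$ (four of the five have $a'>a$ and the last has $a'=a$ with $b'>b$). Thus $M$ is upper unitriangular, hence invertible, and the hexagon vectors are linearly independent.

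The only real obstacle is pinpointing the right distinguished vertex. An arbitrary choice of one of the six vertices of $\H_{a,b,c}$ would not produce a triangular submatrix; what makes $\e_{a-1,b,c+1}$ work is that it is the lex-extreme vertex of the hexagon, which forces any competing hexagon passing through that lattice point to have a lex-larger center.
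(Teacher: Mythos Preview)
Your proof is correct and follows the same approach as the paper. The paper's own proof is a two-sentence sketch: for the eigenvalue claim it invokes~\eqref{Aeijk} and leaves the cancellation implicit, and for linear independence it simply observes that the lexicographic leading term of $\H_{a,b,c}$ is $\e_{a-1,b,c+1}$, distinct for distinct centers. Your expanded cancellation computation and your upper-unitriangular submatrix argument are exactly this leading-term observation spelled out in full (note that your case-by-case check that the five other hexagon vertices yield lex-larger centers $(a',b',c')$ is, via the order-preserving shift $(a,b,c)\mapsto(a-1,b,c+1)$, equivalent to verifying that $(a-1,b,c+1)$ is the lex-minimum vertex of the hexagon).
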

\begin{proof}
The equality $A\H_{a,b,c}=-3\H_{a,b,c}$ is straightforward from~\eqref{Aeijk}.
The lexicographic leading term of $\H_{a,b,c}$ is $\e_{a-1,b,c+1}$, which is different
for each $(a,b,c)$, implying linear independence.
\end{proof}

\begin{proposition} \label{basis-lemma}
Let $n\geq 1$ and let $\HH_n=\{\H_{a,b,c} \st 0<a,b,c<n\}$.
Then the spaces $\Rr\HH_n$ and $\Rr\BB_n$ spanned by $\HH_n$ and $\BB_n$ are orthogonal complements in $\Rr^N$.  In particular, $\dim\Rr\BB_n=\binom{n+2}{2}-\binom{n-1}{2} = 3n$, and the set $\BB_n'$ is a basis for $\Rr\BB_n$
(and all linear relations on the $\X_i,\Y_i,\Z_i$ are generated by those of Lemma~\ref{XYZ-relations}).
\end{proposition}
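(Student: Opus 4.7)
The plan is a dimension-theoretic argument combining orthogonality of $\HH_n$ against $\BB_n$ with a direct computation of the kernel of the spanning map
\[
\phi\colon\Rr^{3(n+1)}\to\Rr^N,\quad (a,b,c)\mapsto \sum_{i=0}^n a_i\X_i+\sum_{i=0}^n b_i\Y_i+\sum_{i=0}^n c_i\Z_i,
\]
whose image is $\Rr\BB_n$. The cornerstone observation is that the coefficient of $\e_{i,j,k}$ in $\phi(a,b,c)$ is exactly $a_i+b_j+c_k$, which trivializes the rest of the analysis.

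First I would verify orthogonality: for every $i$, $\langle\H_{a,b,c},\X_i\rangle=0$ because the six terms of $\H_{a,b,c}$ split into three pairs with first coordinates $a-1$, $a$, $a+1$, and within each pair the two terms carry opposite signs; the cases $\Y_j$ and $\Z_k$ follow by $\Sym_3$-symmetry of the hexagon. Combined with Proposition~\ref{hex-prop} this yields $\Rr\HH_n\subseteq(\Rr\BB_n)^\perp$ and
\[
\dim\Rr\BB_n \;\leq\; \binom{n+2}{2}-\binom{n-1}{2} \;=\; 3n.
\]

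Next, to compute $\dim\Rr\BB_n$ exactly, I would analyze $\ker\phi$: a triple $(a,b,c)$ lies in the kernel iff $a_i+b_j+c_k=0$ whenever $i+j+k=n$. Taking consecutive differences of this identity in $i$ (with $j$ fixed) and in $j$ (with $i$ fixed) forces $a_{i+1}-a_i$, $b_{i+1}-b_i$, and $c_{i+1}-c_i$ all to equal one and the same constant $u$, so that $a_i=a_0+iu$, $b_i=b_0+iu$, $c_i=c_0+iu$ subject to the single scalar relation $a_0+b_0+c_0+nu=0$. Thus $\ker\phi$ is three-dimensional, giving $\dim\Rr\BB_n=3(n+1)-3=3n$. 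The three explicit kernel vectors corresponding to $(a_0,b_0,c_0,u)=(1,-1,0,0),\,(1,0,-1,0),\,(n,0,0,-1)$ are exactly the three relations extracted from Lemma~\ref{XYZ-relations}: $\sum\X_i=\sum\Y_i$, $\sum\X_i=\sum\Z_i$, and (eliminating $\J$) $\sum_i(n-i)\X_i=\sum_i i\Y_i+\sum_i i\Z_i$. Hence those three relations generate every linear dependence on $\{\X_i,\Y_i,\Z_i\}$.

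Finally, equality of dimensions forces $\Rr\HH_n=(\Rr\BB_n)^\perp$. For the basis claim, I would use the three relations above to solve for $\X_n,\Y_n,\Z_n$ in terms of $\BB'_n$: their coefficient submatrix is
\[
\begin{pmatrix}1&-1&0\\1&0&-1\\0&n&n\end{pmatrix}
\]
with determinant $2n\neq 0$, so the inversion is legitimate for $n\geq 1$; thus $\BB'_n$ spans $\Rr\BB_n$, and having cardinality $3n=\dim\Rr\BB_n$, it is a basis. No serious obstacle is anticipated; the main subtlety is the kernel computation, where the differencing argument producing a common $u$ for all three sequences needs careful bookkeeping, but once done it simultaneously supplies the dimension count and identifies Lemma~\ref{XYZ-relations} as the complete list of relations.
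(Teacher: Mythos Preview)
Your proof is correct and takes a genuinely different route from the paper for the lower bound on $\dim\Rr\BB_n$. Both arguments open with the same orthogonality check, giving $\dim\Rr\BB_n\leq 3n$. For the reverse inequality, the paper proceeds by induction on~$n$: it arranges $\X_n,\Y_n,\Z_n,\dots,\X_0,\Y_0,\Z_0$ as the columns of a matrix $M_n$, reorders the columns to exhibit a block-triangular form $\left[\begin{smallmatrix}U&*\\0&M_{n-1}\end{smallmatrix}\right]$ with $\rank U=3$, and concludes $\rank M_n\geq 3n$ by induction; it then invokes Lemma~\ref{XYZ-relations} separately to eliminate $\X_n,\Y_n,\Z_n$ and obtain the basis~$\BB'_n$. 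Your approach instead computes $\ker\phi$ directly from the pointwise identity that the $\e_{i,j,k}$-coefficient of $\phi(a,b,c)$ is $a_i+b_j+c_k$, and a short differencing argument shows the kernel is exactly three-dimensional and is spanned precisely by the relations of Lemma~\ref{XYZ-relations}. This is cleaner: it avoids induction, yields the dimension and the complete list of relations simultaneously, and makes the final basis claim for $\BB'_n$ a routine $3\times3$ determinant check. The paper's matrix-rank argument is more concrete and perhaps more easily visualized, but it is longer and does not by itself identify the relations.
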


\begin{proof}
The scalar product $\H_{a,b,c}\cdot\X_i$ is clearly zero if the two vectors have disjoint supports
(i.e., $i\not\in\{a-1,a,a+1\}$) and is $-1+1=0$ otherwise (geometrically, this
corresponds to the statement that any two adjacent vertices in the hexagon occur
with opposite signs in $\H_{a,b,c}$; see Figure~\ref{hxyz-figure}).
Therefore $\Rr\HH_n$ and $\Rr\BB_n$ are orthogonal subspaces of~$\Rr^N$, and $\dim\Rr\BB_n\leq 3n$.
For the opposite inequality, we induct on $n$.  In the base case $n=1$, the vectors $X_0,Y_0,Z_0$
form a basis of $\Rr^3$.  For larger $n$, let $M_n$ be the matrix with columns $X_n,Y_n,Z_n,\dots,X_0,Y_0,Z_0$
and rows ordered lexicographically, and let $\tilde M_n$ be $M_n$ with the columns reordered as
$$X_0,\; Y_n,\; Z_n,\ \ X_n,\; Y_{n-1},\; Z_{n-1},\ \ \dots,\ \ X_1,\; Y_0,\; Z_0.$$
For example,
\[
\tilde M_3=\begin{array}{c|ccc|ccccccccc}
& X_0 & Y_3 & Z_3 & X_3 & Y_2 & Z_2 & X_2 & Y_1 & Z_1 & X_1 & Y_0 & Z_0 \\\hline
003 & 1 & 0 & 1 & 0 & 0 & 0 & 0 & 0 & 0 & 0 & 1 & 0\\
012 & 1 & 0 & 0 & 0 & 0 & 1 & 0 & 1 & 0 & 0 & 0 & 0\\
021 & 1 & 0 & 0 & 0 & 1 & 0 & 0 & 0 & 1 & 0 & 0 & 0\\
030 & 1 & 1 & 0 & 0 & 0 & 0 & 0 & 0 & 0 & 0 & 0 & 1\\\hline
102 & 0 & 0 & 0 & 0 & 0 & 1 & 0 & 0 & 0 & 1 & 1 & 0\\
111 & 0 & 0 & 0 & 0 & 0 & 0 & 0 & 1 & 1 & 1 & 0 & 0\\
120 & 0 & 0 & 0 & 0 & 1 & 0 & 0 & 0 & 0 & 1 & 0 & 1\\
201 & 0 & 0 & 0 & 0 & 0 & 0 & 1 & 0 & 1 & 0 & 1 & 0\\
210 & 0 & 0 & 0 & 0 & 0 & 0 & 1 & 1 & 0 & 0 & 0 & 1\\
300 & 0 & 0 & 0 & 1 & 0 & 0 & 0 & 0 & 0 & 0 & 1 & 1\\
\end{array}\]

If $a>0$, then the entries of $M_n$ in row $(a,b,c)$ and columns
$X_i,Y_i,Z_i$ equal the entries of $M_{n-1}$ in row $(a-1,b,c)$
and columns $X_{i-1},Y_i,Z_i$ respectively.  Hence~$\tilde
M_n$ has the block form $\displaystyle\left[\begin{array} {c|c} U & *\\\hline 0 & M_{n-1}\end{array} \right]$,
where the entries of $*$ are irrelevant and
\[
U = \begin{bmatrix} 1 & 0 & 1\\ 1 & 0 & 0\\ \vdots&\vdots&\vdots\\ 1 & 0 & 0&\\ 1 & 1 & 0\end{bmatrix}.\]
Since $\rank U=3$, it follows by induction that $\rank M_n \geq \rank M_{n-1}+3 = 3n$.
Using Lemma~\ref{XYZ-relations}, one can solve for each of
$\X_n$, $\Y_n$, and $\Z_n$ as linear combinations of the vectors in $\BB'_n$.  It follows that $\BB'_n$ is a basis,
and that the linear relations of Lemma~\ref{XYZ-relations} generate all linear relations
on the vectors $\{\X_i,\Y_i,\Z_i\}$.
\end{proof}

\subsection{Non-Hexagon Eigenvectors}

We now determine the other eigenspaces of~$A$.  The vector $\J$ spans
an eigenspace of dimension 1; in addition, we will show that there is
one eigenspace of dimension~2 (Prop.~\ref{dim-two-eigenspace}) and two
families of eigenspaces of dimension~3
(Props.~\ref{appalling-calculation-one}
and~\ref{appalling-calculation-two}).  Together with the hexagon
vectors, these form a complete decomposition of $\Rr^N$ into
eigenspaces of~$A$.  Throughout, let $\sigma$ and $\rho$ denote the
permutations $(1\ 2\ 3)$ and $(1\ 2)$ (written in cycle notation),
respectively, so that
$$
\sigma(\X_i)=\Y_i,\ \ \sigma(\Y_j)=\Z_j,\ \ \sigma(\Z_k)=\X_k,\ \ 
\rho(\X_i)=\Y_i,\ \ \rho(\Y_j)=\X_j,\ \ \rho(\Z_k)=\Z_k.
$$

\begin{proposition} \label{dim-two-eigenspace}
Let $n\geq1$ and $k=\lfloor n/2\rfloor$.  Then
$$\R := \X_k-\Y_k-\X_{k+1}+\Y_{k+1}$$
is a nonzero eigenvector of $A$ with eigenvalue $n-k-3=(n-6)/2$ if $n$ is even,
or $n-k-2=(n-3)/2$ if $n$ is odd.  Moreover, the $\Sym_3$-orbit of $\R$ has dimension 2.
\end{proposition}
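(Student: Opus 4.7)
The plan is to apply $A$ directly to $\R$ using the recursions \eqref{AX} and \eqref{AY}, exploiting the $\X \leftrightarrow \Y$ antisymmetry of $\R$. First I would compute $A(\X_i - \Y_i)$: the diagonal terms contribute $(n-i-2)(\X_i - \Y_i)$, and in the sums $\sum_{j=0}^{n-i}[\Y_j+\Z_j]$ and $\sum_{j=0}^{n-i}[\X_j+\Z_j]$ the $\Z_j$ terms cancel, giving
\[
A(\X_i - \Y_i) = (n-i-2)(\X_i - \Y_i) \;+\; \sum_{j=0}^{n-i}(\Y_j - \X_j).
\]
Writing $\R = (\X_k - \Y_k) - (\X_{k+1} - \Y_{k+1})$ and subtracting, the two collection sums differ only in the $j=n-k$ term, yielding
\[
A\R = (n-k-2)(\X_k - \Y_k) - (n-k-3)(\X_{k+1} - \Y_{k+1}) + (\Y_{n-k} - \X_{n-k}).
\]

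Next I would split on the parity of $n$. If $n = 2k$ then $n-k = k$, so the extra term is $-(\X_k - \Y_k)$, which combines with the first term to give exactly $(n-k-3)\R = \tfrac{n-6}{2}\R$. If $n = 2k+1$ then $n-k = k+1$, so the extra term is $-(\X_{k+1} - \Y_{k+1})$, which combines with the second term to give $(n-k-2)\R = \tfrac{n-3}{2}\R$. This is the whole of the eigenvalue computation; it is mechanical once one sees the antisymmetric structure of $\R$, and it is the only step that interacts with the parity.

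To see that $\R \neq 0$, I would isolate a single coefficient: the basis vector $\e_{k,0,n-k}$ lies in the support of $\X_k$ only (not of $\Y_k$, since the second coordinate is $0 \neq k$ when $k \geq 1$; nor of $\X_{k+1}$ or $\Y_{k+1}$ by similar coordinate checks), so its coefficient in $\R$ is $1$; the tiny case $k=0$ (i.e.\ $n=1$) can be checked directly.

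Finally, for the dimension of the $\Sym_3$-orbit, I would observe that $\rho$ swaps $\X_i$ with $\Y_i$, so $\rho(\R) = -\R$; hence the orbit span equals the span of $\R,\sigma\R,\sigma^2\R$. A direct telescoping check shows $\R + \sigma\R + \sigma^2\R = 0$ (each of $\X_k,\Y_k,\Z_k$ appears once with coefficient $+1$ and once with $-1$, and likewise at index $k+1$), while $\R$ and $\sigma\R$ are linearly independent since one is supported on the $\X,\Y$ lines and the other on the $\Y,\Z$ lines. Thus the orbit spans a $2$-dimensional subspace. I do not foresee a significant obstacle in this proof; the only care needed is in the parity-dependent bookkeeping of the single leftover term $\Y_{n-k} - \X_{n-k}$.
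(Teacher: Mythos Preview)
Your proof is correct and follows essentially the same route as the paper: direct application of \eqref{AX}--\eqref{AY} to $\R$, cancellation of the $\Z$-sums, reduction to the single leftover term $\Y_{n-k}-\X_{n-k}$, and the same parity split. One small point where you are in fact more accurate than the paper: you correctly observe $\rho(\R)=-\R$, whereas the paper states $\rho(\R)=\R$; either claim yields the same span for the $\Sym_3$-orbit, so the conclusion is unaffected.
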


\begin{proof}
By \eqref{AX}\dots\eqref{AZ},
\begin{align*}
A\R  &= (n-k-2)(\X_k - \Y_k) + \sum_{j=0}^{n-k}\Big[\Y_j-\X_j\Big] +(n-k-3)(\Y_{k+1}-\X_{k+1}) + \sum_{j=0}^{n-k-1}\Big[\X_j-\Y_j\Big]\\
 &= (n-k-2)(\X_k - \Y_k) + (\Y_{n-k}-\X_{n-k}) +(n-k-3)(\Y_{k+1}-\X_{k+1}) \\\\
 &= \begin{cases}
       (n-k-2)(\X_k - \Y_k) + (\Y_{k}-\X_{k}) +(n-k-3)(\Y_{k+1}-\X_{k+1}) & \text{ if $n$ is even},\\
       (n-k-2)(\X_k - \Y_k) + (\Y_{k+1}-\X_{k+1}) +(n-k-3)(\Y_{k+1}-\X_{k+1}) & \text{ if $n$ is odd}
       \end{cases}\\\\
 &= \begin{cases}
       (n-k-3)(\X_k - \Y_k)+(n-k-3)(\Y_{k+1}-\X_{k+1}) & \text{ if $n$ is even},\\
       (n-k-2)(\X_k - \Y_k) +(n-k-2)(\Y_{k+1}-\X_{k+1}) & \text{ if $n$ is odd}
       \end{cases}\\\\
 &= \begin{cases}
       (n-k-3) \R & \text{ if $n$ is even},\\
       (n-k-2) \R & \text{ if $n$ is odd}
       \end{cases}
\end{align*}
as desired.  The vectors $\R$ and $\sigma(\R)=\Y_k-\Z_k-\Y_{k+1}+\Z_{k+1}$
are linearly independent; on the other hand, $\rho(\R)=\R$ and $\R+\sigma(\R)+\sigma^2(\R)=0$,
so the $\Sym_3$-orbit of $\R$ has dimension 2.
\end{proof}

\begin{proposition} \label{appalling-calculation-one}
For all integers $k$ with $0\leq k\leq \lfloor\frac{n-3}{2}\rfloor$, the vector
\[
\P_k := - (n-2k-1)(n-2k-2) \Z_{n-k} + \sum_{i=k+1}^{n-k-1} \Big[ 2(i-k-1)\Z_i+(2i-n)(\X_i+\Y_i) \Big]
\]
is a nonzero eigenvector of $A$ with eigenvalue $k-2$.
Moreover, the $\Sym_3$-orbit of $\P_k$ has dimension 3.
\end{proposition}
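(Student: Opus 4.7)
The strategy is to verify $A\P_k = (k-2)\P_k$ by direct computation from \eqref{AX}--\eqref{AZ}, then to establish nonvanishing and the orbit dimension by shorter arguments.

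First, I would expand $A\P_k$ by linearity: applying each of \eqref{AX}--\eqref{AZ} produces, for every summand of $\P_k$, a diagonal term $(n-i-2)$ times the same lattice line plus a sum over transverse lines indexed by $m \in \{0,\ldots,n-i\}$. Regrouping the result by the basis vectors $\X_j$, $\Y_j$, $\Z_j$ and noting that $\P_k$ is $\rho$-invariant (the coefficients of $\X_i$ and $\Y_i$ agree), the identity $\rho A = A\rho$ collapses the $\X_j$ and $\Y_j$ equations into a single check, so only two types of coefficient equations remain.

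For each of $\X_j$ and $\Z_j$, the verification splits into cases according to the size of $j$: (a) $0 \leq j \leq k$, where the inner sums run over the full range $k+1 \leq i \leq n-k-1$ and the first two identities of Lemma \ref{summations-for-appalling-one} apply; (b) $k+1 \leq j \leq n-k-1$, where the inner bound is truncated to $i \leq n-j$ and the remaining two identities of the same lemma apply; and (c) $j > n-k$, where every contribution is empty. The distinguished summand $-(n-2k-1)(n-2k-2)\Z_{n-k}$ of $\P_k$ is precisely what is needed to balance the otherwise uncovered boundary case $j = n-k$ of the $\Z_j$ equation. The main obstacle is not conceptual but rather the bookkeeping of four interacting double sums with case-dependent index ranges; Lemma \ref{summations-for-appalling-one} is tailored to make those simplifications routine.

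Nonvanishing is straightforward when $k \geq 1$: every index in $\P_k$ is at most $n-1$, so $\P_k$ lies in $\operatorname{span}\BB'_n$, and its coefficient of $\Z_{n-k}$ is $-(n-2k-1)(n-2k-2) \neq 0$ by the hypothesis $k \leq \lfloor(n-3)/2\rfloor$. The case $k = 0$ requires rewriting $\Z_n$ via Lemma \ref{XYZ-relations} and then checking that the resulting expansion still has a nonzero coefficient in $\BB'_n$ (for instance on $\X_0$). Finally, since $\rho\P_k = \P_k$, the $\Sym_3$-orbit of $\P_k$ is $\{\P_k,\sigma\P_k,\sigma^2\P_k\}$; these three vectors carry their heavy coefficient $-(n-2k-1)(n-2k-2)$ on the three distinct lines $\Z_{n-k}$, $\X_{n-k}$, $\Y_{n-k}$, and each vanishes on the other two. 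Proposition \ref{basis-lemma} then yields linear independence, so the orbit has dimension exactly $3$.
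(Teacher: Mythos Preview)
Your approach is correct and essentially identical to the paper's: both apply \eqref{AX}--\eqref{AZ}, split the resulting double sum according to whether $j\leq k$ or $k+1\leq j\leq n-k-1$, invoke Lemma~\ref{summations-for-appalling-one}, and then argue linear independence of $\P_k,\sigma\P_k,\sigma^2\P_k$ via the distinguished coefficients on $\Z_{n-k},\X_{n-k},\Y_{n-k}$ in the basis~$\BB'_n$. Your treatment is in fact slightly more careful than the paper's in flagging that the $k=0$ case (where $\Z_{n-k}=\Z_n\notin\BB'_n$) needs separate handling for both nonvanishing and the orbit-dimension argument; the paper glosses over this, though one can also sidestep it uniformly by reading off the coefficient of $\e_{0,k,n-k}$ directly.
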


\begin{proof}
The upper bound on $k$ is equivalent to $n-2k-2>0$, so the coefficient of $\Z_{n-k}$ in $\P_k$ is nonzero,
so $\P_k\neq 0$.  By \eqref{AX}\dots\eqref{AZ}, we have
\begin{align*}
A\P_k &= - (n-2k-1)(n-2k-2)\left( (k-2) \Z_{n-k} + \sum_{i=0}^k\Big[\X_i+\Y_i\Big] \right)
 \\&\qquad+ \sum_{i=k+1}^{n-k-1}\left[ 2(i-k-1)\left((n-i-2) \Z_i + \sum_{j=0}^{n-i}\Big[\X_j+\Y_j\Big]\right)\right.
 \\&\qquad\left.+(2i-n)\left((n-i-2)(\X_i+\Y_i) + \sum_{j=0}^{n-i}\Big[\X_j+\Y_j+2\Z_j\Big]\right) \right]
\\
&= - (n-2k-1)(n-2k-2)(k-2) \Z_{n-k}  - (n-2k-1)(n-2k-2)\sum_{i=0}^k \Big[ \X_i+\Y_i \Big]
 \\&\qquad+ \sum_{i=k+1}^{n-k-1} \Big[ (2i-n)(n-i-2)(\X_i+\Y_i) + 2(i-k-1)(n-i-2) \Z_i \Big]
 \\&\qquad+ \sum_{i=k+1}^{n-k-1} \sum_{j=0}^{n-i} \Big[ (4i-2k-2-n) (\X_j+\Y_j) + (4i-2n)\Z_j \Big].
\end{align*}
Interchanging the order of summation in the double sum gives
\begin{align*}
A\P_k&=- (n-2k-1)(n-2k-2)(k-2) \Z_{n-k}\\
&\qquad- (n-2k-1)(n-2k-2)\sum_{i=0}^k\Big[\X_i+\Y_i\Big]
 \\&\qquad+ \sum_{i=k+1}^{n-k-1} \Big[ (2i-n)(n-i-2)(\X_i+\Y_i) + 2(i-k-1)(n-i-2) \Z_i \Big]
 \\&\qquad+ \sum_{j=0}^{k} \sum_{i=k+1}^{n-k-1} \Big[ (4i-2k-2-n) (\X_j+\Y_j) + (4i-2n)\Z_j \Big]
 \\&\qquad+ \sum_{j=k+1}^{n-k-1} \sum_{i=k+1}^{n-j} \Big[ (4i-2k-2-n) (\X_j+\Y_j) + (4i-2n)\Z_j \Big]
 \end{align*}
Applying the summation formulas of  Lemma~\ref{summations-for-appalling-one} gives
\begin{align*}
A\P_k&= - (n-2k-1)(n-2k-2)(k-2) \Z_{n-k}  - (n-2k-1)(n-2k-2)\sum_{i=0}^k\Big[\X_j+\Y_j\Big]
 \\&\qquad+ \sum_{i=k+1}^{n-k-1} \Big[(2i-n)(n-i-2)(\X_i+\Y_i) + 2(i-k-1)(n-i-2) \Z_i\Big]
 \\&\qquad+ \sum_{j=0}^k \Big[(n-2k-1)(n-2k-2) (\X_j+\Y_j)\Big]
 \\&\qquad+ \sum_{j=k+1}^{n-k-1} \Big[(2j-n)(k+j-n) (\X_j+\Y_j) + 2(j-n+k)(j-1-k)\Z_j\Big]
\\
&= - (n-2k-1)(n-2k-2)(k-2) \Z_{n-k} \\&\qquad + \sum_{i=k+1}^{n-k-1} \Big[ (2i-n)(k-2)(\X_i+\Y_i) + 2(i-k-1)(k-2) \Z_i\Big]\\
&= (k-2) \left( -(n-2k-1)(n-2k-2) \Z_{n-k} + \sum_{i=k+1}^{n-k-1} \Big[ (2i-n)(\X_i+\Y_i) + 2(i-k-1) \Z_i \Big] \right)\\
&= (k-2)\P_k
\end{align*}
verifying that $\P_k$ is an eigenvector, as desired.

Consider the vectors $\P_k$, $\sigma(\P_k)$, $\sigma^2(\P_k)$ as elements of the
vector space $\Rr\BB_n$, expanded in terms of the basis $\BB_n'$
(see Prop.~\ref{basis-lemma}).  In these expansions,
the basis vectors $\Z_{n-k}$, $\X_{n-k}$, $\Y_{n-k}$ occur
with nonzero coefficients only in $\P_k$, $\sigma(\P_k)$, $\sigma^2(\P_k)$ respectively.  This shows that these three vectors are linearly independent.  On the other hand,
$\rho(\P_k)=\P_k$, so the $\Sym_3$-orbit of $\P_k$ has dimension~3.
\end{proof}

\begin{subequations}
Define
\begin{align}
\Q_k &:= (n-2k+1)(n-2k+2) \Z_k\notag\\
&\qquad + \sum_{j=k}^{n-k}\Big[ (2j-n)(\X_j+\Y_j) - 2(n-j-k+1) \Z_j \Big] \label{QK-one}\\
&= (n-2k+1)(n-2k) \Z_k  +(2k-n)(\X_k+\Y_k)\notag\\
&\qquad + \sum_{j=k+1}^{n-k}\Big[ (2j-n)(\X_j+\Y_j) - 2(n-j-k+1) \Z_j \Big].\label{QK-two}
\end{align}
\end{subequations}
Both of these expressions for $\Q_n$ will be useful in what follows.

\begin{proposition} \label{appalling-calculation-two}
For all integers $k$ with $0\leq k\leq \lfloor\frac{n-2}{2}\rfloor$, the vector $\Q_k$ is a nonzero eigenvector of $A$ with eigenvalue $n-k-2$.
Moreover, the $\Sym_3$-orbit of $\Q_k$ has dimension 3.
\end{proposition}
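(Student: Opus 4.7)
The proof parallels that of Proposition~\ref{appalling-calculation-one}. First, nonvanishing of~$\Q_k$ is immediate from~\eqref{QK-two}: the coefficient of $\Z_k$ equals $(n-2k+1)(n-2k)$, which is nonzero since the hypothesis $k\leq\lfloor(n-2)/2\rfloor$ forces $n-2k\geq 2$.

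To establish $A\Q_k = (n-k-2)\Q_k$, I would apply the formulas \eqref{AX}--\eqref{AZ} termwise to the expression~\eqref{QK-one}. This yields a direct ``diagonal'' contribution (the factor $(n-j-2)$ appearing in each of $A\X_j,A\Y_j,A\Z_j$) together with a double sum which, after simplifying its coefficients, takes the form
\[
\sum_{j=k}^{n-k}\sum_{i=0}^{n-j}\bigl[(4j-3n+2k-2)(\X_i+\Y_i)+(4j-2n)\Z_i\bigr].
\]
After swapping the order of summation and partitioning the outer index into the ranges $[0,k-1]$ and $[k,n-k]$, the inner sums over~$j$ match (after renaming the dummy variable) the four identities of Lemma~\ref{summations-for-appalling-two}. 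Collecting the resulting coefficients on each $\X_i,\Y_i,\Z_i$ and combining with the diagonal contribution should yield exactly $(n-k-2)$ times the corresponding coefficient in~$\Q_k$, establishing the eigenvalue equation.

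For the orbit statement, note first that $\rho(\Q_k)=\Q_k$ because $\Q_k$ is symmetric in the $\X$ and $\Y$ families, which already bounds the orbit dimension above by~$3$. For the matching lower bound, I would extract the coefficients of $\X_k,\Y_k,\Z_k$ in $\Q_k,\sigma(\Q_k),\sigma^2(\Q_k)$, expanded in the basis $\BB_n'$ (Proposition~\ref{basis-lemma}). Writing $a=2k-n$ and $c=(n-2k+1)(n-2k)$, these assemble into the $3\x 3$ matrix
\[
M=\begin{pmatrix} a & c & a \\ a & a & c \\ c & a & a \end{pmatrix},
\]
whose determinant factors as $(c-a)^{2}(c+2a)=(n-2k)^{2}(n-2k+2)^{2}(n-2k)(n-2k-1)$. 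The hypothesis $n-2k\geq 2$ makes every factor strictly positive, so the three vectors are linearly independent and the orbit dimension is exactly~$3$.

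The main obstacle is the bookkeeping in the double-sum reduction. The structure closely mirrors Proposition~\ref{appalling-calculation-one}, but the shifted lower limit ($j=k$ rather than $j=k+1$) and the asymmetric coefficient $-2(n-j-k+1)$ on~$\Z_j$ in~\eqref{QK-one} alter the boundary contributions in a way that Lemma~\ref{summations-for-appalling-two} has been designed to absorb. Switching between the two equivalent forms~\eqref{QK-one} and~\eqref{QK-two} at the boundary $j=k$ (for instance, using~\eqref{QK-two} to double-check the final coefficient of~$\Z_k$) should keep the calculation tractable.
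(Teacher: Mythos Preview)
Your approach is essentially the same as the paper's: apply \eqref{AX}--\eqref{AZ} to~\eqref{QK-one}, reverse the order of the resulting double sum, invoke Lemma~\ref{summations-for-appalling-two}, and then verify the orbit dimension via a $3\times 3$ minor after noting $\rho(\Q_k)=\Q_k$.

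There is, however, a gap in your orbit-dimension argument at $k=0$. When $k=0$, the sum in~\eqref{QK-two} runs up to $j=n$ and therefore involves $\X_n,\Y_n,\Z_n$, which do \emph{not} belong to the basis~$\BB_n'$. The values $a=2k-n$ and $c=(n-2k+1)(n-2k)$ you write down are what one reads directly from~\eqref{QK-two}; they are the actual $\BB_n'$-coefficients only for $k\geq 1$ (where the top index $n-k$ is at most $n-1$). For $k=0$, after using Lemma~\ref{XYZ-relations} to eliminate $\X_n,\Y_n,\Z_n$, the coefficients of $\X_0,\Y_0,\Z_0$ in $\Q_0$ become $-(n+1),-(n+1),(n+1)^2$ rather than $-n,-n,n(n+1)$. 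The paper handles this by splitting off the case $k=0$: it first simplifies $\Q_0=(n+1)(n+2)\Z_0-2(n+1)\J$ and then checks linear independence of $\Q_0,\sigma(\Q_0),\sigma^2(\Q_0)$ via the $3\times3$ minor in the standard-basis rows $\e_{n00},\e_{0n0},\e_{00n}$. Your method can in fact be repaired at $k=0$ (the corrected minor is still of your circulant form, with determinant $(n+1)^3(n+2)^2(n-1)\neq 0$ for $n\geq 2$), but as written the argument does not cover that case.
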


\begin{proof}
The statement is vacuously true if $n<2$.
By \eqref{QK-one}, the coefficient of  $\Z_k$ in~$\Q_k$ is $(n-2k+1)(n-2k)$.  Provided that $k\leq\lfloor\frac{n-1}{2}\rfloor$,
we have $n>2k$, so this coefficient is nonzero, as is the vector~$\Q_k$.
Applying \eqref{AX}\dots\eqref{AZ}, we have
\begin{align*}
A\Q_k
&= (n-2k+1)(n-2k+2) \left((n-k-2) \Z_k + \sum_{j=0}^{n-k}\Big[\X_j+\Y_j\Big] \right)\\
&\qquad  + \sum_{i=k}^{n-k}\left[ (2i-n)\left((n-i-2)(\X_i+\Y_i) + \sum_{j=0}^{n-i}\Big[\X_j+\Y_j+2\Z_j\Big]\right) \right.\\
&\qquad \left. - 2(n-i-k+1) \left((n-i-2) \Z_i + \sum_{j=0}^{n-i}\Big[\X_j+\Y_j\Big]\right) \right]\\
&= (n-2k+1)(n-2k+2)(n-k-2)\Z_k + (n-2k+1)(n-2k+2)\sum_{j=0}^{n-k}\Big[\X_j+\Y_j\Big]\\
&\qquad + \sum_{i=k}^{n-k} \Big[ (2i-n)(n-i-2)(\X_i+\Y_i) - 2(n-i-k+1)(n-i-2)\Z_i \Big]\\
&\qquad + \sum_{i=k}^{n-k}\sum_{j=0}^{n-i} \Big[ (2i-n)(\X_j+\Y_j+2\Z_j) -2(n-i-k+1)(\X_j+\Y_j) \Big]
\end{align*}
Interchanging the order of summation in the double sum gives
\begin{align*}
A\Q_k
&= (n-2k+1)(n-2k+2)(n-k-2)\Z_k + (n-2k+1)(n-2k+2)\sum_{j=0}^{n-k}\Big[\X_j+\Y_j\Big]
\\&\qquad + \sum_{j=k}^{n-k} \Big[ (2j-n)(n-j-2)(\X_j+\Y_j) - 2(n-j-k+1)(n-j-2)\Z_j \Big]
\\&\qquad + \sum_{j=0}^{k-1}\sum_{i=k}^{n-k} \Big[ (4i-2n)\Z_j + (4i-3n+2k-2)(\X_j+\Y_j) \Big]
\\&\qquad + \sum_{j=k}^{n-k}\sum_{i=k}^{n-j} \Big[ (4i-2n)\Z_j + (4i-3n+2k-2)(\X_j+\Y_j) \Big]
\end{align*}
Applying the summation formulas of Lemma~\ref{summations-for-appalling-two} gives
\begin{align*}
A\Q_k &= (n-2k+1)(n-2k+2)(n-k-2)\Z_k + (n-2k+1)(n-2k+2)\sum_{j=0}^{n-k}\Big[\X_j+\Y_j\Big]\\
&\qquad + \sum_{j=k}^{n-k} \Big[ (2j-n)(n-j-2)(\X_j+\Y_j) - 2(n-j-k+1)(n-j-2)\Z_j \Big]\\
&\qquad - \sum_{j=0}^{k-1} \Big[ (n-2k+1)(n-2k+2)(\X_j+\Y_j) \Big]\\
&\qquad + \sum_{j=k}^{n-k} \Big[ 2 (j - k) (-n + j + k - 1)\Z_j + (2 j + 2 - 4 k + n) (-n + j + k - 1)(\X_j+\Y_j) \Big]\\
&= (n-2k+1)(n-2k+2)(n-k-2)\Z_k\\
&\qquad+ \sum_{j=k}^{n-k} \Big[ (n-k-2)(2j-n)(\X_j+\Y_j) - 2(n-j-k+1)(n-k-2)\Z_j \Big]\\
&= (n-k-2) \left( (n-2k+1)(n-2k+2)\Z_k + \sum_{j=k}^{n-k} \Big[ (2j-n)(\X_j+\Y_j) - 2(n-j-k+1)\Z_j \Big] \right) \\
&= (n-k-2) \Q_k
\end{align*}
as desired.

We now show that the $\Sym_3$-orbit of $\Q_k$ has dimension~3.
Since $\rho(\Q_k)=\Q_k$, the orbit is spanned by the three
vectors $\Q_k$, $\sigma(\Q_k)$, $\sigma^2(\Q_k)$.
We consider two cases: $k=0$ and $k>0$.

First, if $k=0$, then the expression~\eqref{QK-one} for $\Q_0$ becomes
(using Lemma~\ref{XYZ-relations})
\begin{align*}
\Q_0 &=
 (n+1)(n+2) \Z_0  + \sum_{j=0}^{n}\Big[ (2j-n)(\X_j+\Y_j) - 2(n-j+1) \Z_j \Big]\\
&=
 (n+1)(n+2) \Z_0  - \sum_{j=0}^{n}\Big[ n(\X_j+\Y_j) + (2n+2) \Z_j \Big]
+2\sum_{j=0}^n j\Big[\X_j+\Y_j+\Z_j\Big]\\
&= (n+1)(n+2) \Z_0  - (4n+2) \J
 +2n\J ~=~  (n^2+3n+2) \Z_0  - (2n+2) \J \\
&= \sum_{i+j=n} (n^2+n) \e_{ij0} + \sum_{i,j,k \st k\neq 0} (-2n-2) \e_{ijk}.
\end{align*}
Accordingly we have
\begin{align*}
\sigma(\Q_0) &= \sum_{j+k=n} (n^2+n) \e_{0jk} + \sum_{i,j,k \st i\neq 0} (-2n-2) \e_{ijk},\\
\sigma^2(\Q_0) &= \sum_{i+k=n} (n^2+n) \e_{i0k} + \sum_{i,j,k \st j\neq 0} (-2n-2) \e_{ijk}.
\end{align*}
Consider the $N\x 3$ matrix with columns $\Q_0,\sigma(\Q_0),\sigma^2(\Q_0)$.  By the previous calculation, the $3\x3$ minor in rows $\e_{n00},\e_{0n0},\e_{00n}$ is
$$\left\vert\begin{array}{ccc}
n^2+n & -2n-2 & n^2+n\\
n^2+n & n^2+n & -2n-2\\
-2n-2 & n^2+n & n^2+n\end{array}\right\vert
=-2(n+1)^3(n+2)^2(n-1)$$
which is nonzero (recall that $n\geq 2$, otherwise the proposition is vacuously true).

On the other hand, if $0<k\leq\lfloor(n-2)/2\rfloor$,
then~\eqref{QK-two} expresses
$\Q_k,\sigma(\Q_k),\sigma^2(\Q_k)$ as column vectors in the
basis $\BB'_n$.  Let $a=2k-n$ and $b=(n-2k)(n-2k+1)$; then
the $3\x3$ minor in rows $\X_k,\Y_k,\Z_k$ is
$$\left\vert\begin{array}{ccc}
a & a & b\\
a & b & a\\
b & a & a\end{array}\right\vert
=(2k-n)^3(n-2k-1)(n-2k+2)^2
$$
which is nonzero because the assumption
$k\leq\lfloor(n-2)/2\rfloor$ implies $n\geq 2k+2$.
\end{proof}

To sum up the results of Section~\ref{proof-main}, we have constructed an explicit decomposition of $\Rr^N$ into eigenspaces of $A(3,n)$ (equivalently, $L(3,n)$).  The eigenvectors are the hexagon vectors $\H_{a,b,c}$ and the special vectors ${\bf J}$, ${\bf R}$, ${\bf P}_k$ and  ${\bf Q}_k$ and their $\Sym_3$-orbits.

\section{Simplicial rook graphs in arbitrary dimension} \label{arbitrary-d}

We now consider the graph $\SR(d,n)$ for arbitrary $d$ and $n$,
with adjacency matrix $A=A(d,n)$.  Recall that $\SR(d,n)$ has $N:=\binom{n+d-1}{d-1}$
vertices and is regular of degree $(d-1)n$.
If two vertices $a=(a_1,\dots,a_d)$, $b=(b_1,\dots,b_d)\in V(d,n)$ differ only in their $i^{th}$ and $j^{th}$ positions
(and are therefore adjacent), we write $a\simij b$.

Let $\Sym_d$ be the symmetric group of order~$d$,
and let $\Alt_d\subset\Sym_d$ be the alternating subgroup.  Let $\sgn$ be the sign function
\[\sgn(\sigma)=\begin{cases} 1& \text{ for } \sigma\in\Alt_d,\\ -1& \text{ for } \sigma\not\in\Alt_d.\end{cases}\]
Let $\tau_{ij}\in\Sym_d$ denote the transposition of $i$ and $j$.  Note that $\Sym_d=\Alt_d\cup\Alt_d\tau_{ij}$
for each $i,j$.

In analogy to the vectors $\X,\Y,\Z$ used in the $d=3$ case, define
\begin{equation} \label{higher-X}
\X_\alpha^{(i,j)} = \e_\alpha + \sum_{\beta:\ \beta\simij\alpha} \e_\beta.
\end{equation}
That is, $\X_\alpha^{(i,j)}$ is the characteristic vector of the lattice line through
$\alpha$ in direction $\e_i-\e_j$.  In particular, if $\alpha\simij\beta$, then $\X_\alpha^{(i,j)}=\X_\beta^{(i,j)}$.  Moreover,
the column of $A$ indexed by $\alpha$ is
\begin{equation} \label{higher-Ae}
A \e_\alpha = -\binom{d}{2}\e_\alpha + \sum_{1\leq i<j\leq d} \X_\alpha^{(i,j)}.
\end{equation}
since $e_\alpha$ itself appears in each summand $\X_\alpha^{(i,j)}$.

\subsection{Permutohedron vectors}
\label{permuto-vector-subsection}

We now generalize the construction of hexagon vectors to arbitrary
dimension.  The idea is that for each point $p$ in the interior of
$n\Delta^{d-1}$ and sufficiently far away from its boundary, there is
a lattice permutohedron centered at $p$, all of whose points are
vertices of $\SR(d,n)$ (see Figure~\ref{perm-figure}), and the
signed characteristic vector of this permutohedron is an eigenvector
of~$A(d,n)$.

\begin{figure}[ht]
\includefigure{2.5in}{2.5in}{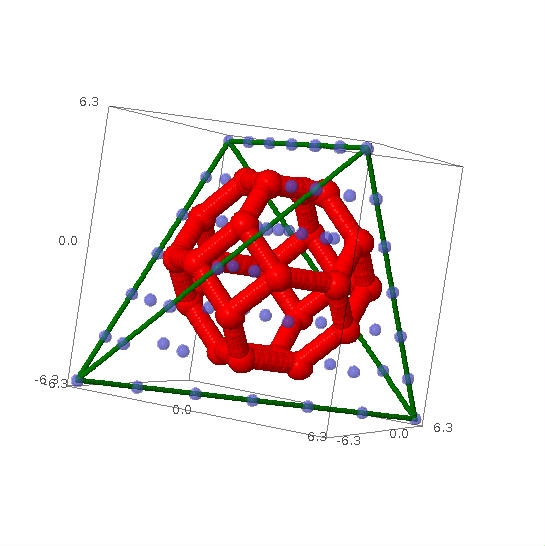}
\caption{\label{perm-figure} A permutohedron vector ($n=6$, $d=4$).}
\end{figure}

\begin{proposition} \label{permutohedron-vector}
Let $p,w\in\Rr^N$ be vectors such that $\{p+\sigma(w) \st \sigma\in\Sym_d\}$ are distinct vertices of $\SR(d,n)$.
(In particular, the entries of $w$ must all be different.)  Define
$$\H_{p,w} = \sum_{\sigma\in\Sym_d} \sgn(\sigma) \e_{p+\sigma(w)}.$$
Then $\H_{p,w}$ is an eigenvector of $A$ with eigenvalue $-\binom{d}{2}$.  Moreover,
for a fixed $w$, the collection of all such eigenvectors $\H_{p,w}$ is linearly independent.
\end{proposition}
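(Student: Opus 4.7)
The plan is to compute $A\H_{p,w}$ directly using~\eqref{higher-Ae} and show that, after extracting the diagonal contribution $-\binom{d}{2}\H_{p,w}$, the remaining lattice-line terms cancel in pairs via a sign-reversing involution on $\Sym_d$. Linear independence will then follow from a leading-term argument in lex order.

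Applying~\eqref{higher-Ae} term-by-term yields
\begin{equation*}
A\H_{p,w} \;=\; -\binom{d}{2}\,\H_{p,w} \;+\; \sum_{1\leq i<j\leq d}\;\sum_{\sigma\in\Sym_d}\sgn(\sigma)\,\X^{(i,j)}_{p+\sigma(w)},
\end{equation*}
so it suffices to show the inner sum vanishes for each fixed pair $i<j$. I would pair $\sigma\in\Sym_d$ with $\iota(\sigma):=\sigma\cdot\tau_{\sigma^{-1}(i),\,\sigma^{-1}(j)}$. Since the entries of $w$ are distinct, $\sigma^{-1}(i)\neq\sigma^{-1}(j)$, so the transposition is nontrivial, $\iota(\sigma)\neq\sigma$, and $\sgn(\iota(\sigma))=-\sgn(\sigma)$. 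A short check that $\iota(\sigma)^{-1}$ sends $i\mapsto\sigma^{-1}(j)$ and $j\mapsto\sigma^{-1}(i)$ shows that the same transposition is attached to $\iota(\sigma)$, so $\iota$ is a fixed-point-free involution on $\Sym_d$. Moreover, a direct index calculation shows that $\iota(\sigma)(w)$ is obtained from $\sigma(w)$ by swapping its $i$-th and $j$-th entries, hence $p+\sigma(w)\simij p+\iota(\sigma)(w)$, and in particular these two vertices span the same $(i,j)$-lattice line, so $\X^{(i,j)}_{p+\sigma(w)}=\X^{(i,j)}_{p+\iota(\sigma)(w)}$. Hence the two contributions of each $\iota$-pair cancel, the inner sum vanishes, and $A\H_{p,w}=-\binom{d}{2}\H_{p,w}$.

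For the linear independence claim, fix $w$ and order $V(d,n)$ lexicographically. Because $w$ has all distinct entries, there is a unique $\sigma_0\in\Sym_d$ for which $\sigma_0(w)$ is the lex-minimum element of the orbit $\{\sigma(w):\sigma\in\Sym_d\}$, and because lex order on $\Zz^d$ is translation-invariant, the lex-minimum element of the support of $\H_{p,w}$ is $p+\sigma_0(w)$, appearing with coefficient $\sgn(\sigma_0)=\pm1$. The map $p\mapsto p+\sigma_0(w)$ is injective, so different $p$'s produce different leading-support vertices, and a standard echelon/leading-term argument then forces any linear dependence among the $\H_{p,w}$ to be trivial.

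The only genuinely delicate point is verifying that $\iota$ is well-defined, fixed-point-free, and sign-reversing, and that swapping positions $i$ and $j$ in $\sigma(w)$ corresponds on the nose to multiplying $\sigma$ by $\tau_{\sigma^{-1}(i),\sigma^{-1}(j)}$ on the right; these are routine once one is careful with the convention for the action of $\Sym_d$ on $\Rr^d$. Everything else (the expansion in the first display, and the leading-term linear independence argument) is immediate.
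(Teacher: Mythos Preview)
Your proof is correct and follows essentially the same approach as the paper: expand via~\eqref{higher-Ae}, then cancel the lattice-line terms for each fixed $(i,j)$ by a sign-reversing involution on $\Sym_d$, and deduce linear independence from distinct lexicographic leading terms $\e_{p+\sigma_0(w)}$. The only cosmetic difference is that your involution $\iota(\sigma)=\sigma\cdot\tau_{\sigma^{-1}(i),\sigma^{-1}(j)}$ is, by the conjugation identity, just left multiplication $\sigma\mapsto\tau_{ij}\sigma$, whereas the paper uses right multiplication $\sigma\mapsto\sigma\tau_{ij}$ (via the coset decomposition $\Sym_d=\Alt_d\cup\Alt_d\tau_{ij}$); which one is appropriate depends exactly on the convention for $\sigma(w)$, as you correctly flag.
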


\begin{proof} By linearity and \eqref{higher-Ae}, we have
\begin{align*}
A \H_{p,w}
&= \sum_{\sigma\in\Sym_d} \sgn(\sigma) \left( -\binom{d}{2}\e_{p+\sigma(w)}+\sum_{1\leq i<j\leq d} \X^{(i,j)}_{p+\sigma(w)} \right) \\
&= -\binom{d}{2} \H_{p,w} + \sum_{\sigma\in\Sym_d} \sgn(\sigma) \sum_{1\leq i<j\leq d} \X^{(i,j)}_{p+\sigma(w)}\\
&= -\binom{d}{2} \H_{p,w} + \sum_{1\leq i<j\leq d} \sum_{\sigma\in\Sym_d} \sgn(\sigma) \X^{(i,j)}_{p+\sigma(w)}\\
&= -\binom{d}{2} \H_{p,w} + \sum_{1\leq i<j\leq d} \sum_{\sigma\in\Alt_d} \big[
\sgn(\sigma)\X^{(i,j)}_{p+\sigma(w)} + \sgn(\sigma\tau_{ij})\X^{(i,j)}_{p+\sigma\tau_{ij}(w)} \big]\\
&= -\binom{d}{2} \H_{p,w}.
\end{align*}
(The summand vanishes because $\sgn(\sigma)=-\sgn(\sigma\tau_{ij})$ and because
changing $\alpha_i$ and $\alpha_j$ does not change $\X^{(i,j)}_{\alpha}$.)
For linear independence, it suffices to observe that the lexicographic leading term of $\H_{p,w}$ is $\e_{p+\tilde w}$,
where $\tilde w$ denotes the unique increasing permutation of $w$, and that these leading terms
are different for different~$p$.
\end{proof}

This result says that we can construct a large eigenspace by fitting many congruent permutohedra into
the dilated simplex.  Depending on the parity of $d$, the centers of these permutohedra will be points in
$\Zz^d$ or $(\Zz+\frac12)^d$.

Let $d$ be a positive integer.  The \emph{standard offset vector} in~$\Rr^d$ is defined
as
\begin{equation} \label{standard-offset}
\soff=\soff_d=((1-d)/2, (3-d)/2, \dots, (d-3)/2, (d-1)/2)\in\Rr^d.
\end{equation}
Note that $\soff\in\Zz^d$ if $d$ is odd, and $\soff\in(\Zz+\frac12)^d$ if $d$ is even.

\begin{proposition} \label{pack-permutohedra}
There are
$$\binom{n-\frac{(d-1)(d-2)}{2}}{d-1}$$
distinct vectors $p$ such that $\H_{p,\soff}$ is an eigenvector of $A(d,n)$
(and these eigenvectors are all linearly independent by Prop.~\ref{permutohedron-vector}).
\end{proposition}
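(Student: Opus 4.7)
The plan is to translate the condition that $\H_{p,\soff}$ be an eigenvector of $A(d,n)$ into explicit arithmetic constraints on the coordinates of $p$, and then count the solutions by stars and bars. By Proposition~\ref{permutohedron-vector}, the requirement is that $\{p+\sigma(\soff):\sigma\in\Sym_d\}$ consist of distinct vertices of $\SR(d,n)$. Distinctness is automatic because the entries of $\soff$ are pairwise distinct, so what must be checked is (i) that $p+\sigma(\soff)\in\Zz_{\geq 0}^d$ for every $\sigma$, and (ii) that the coordinates sum to $n$. Condition (ii) simplifies immediately: one computes
\[
\sum_{i=1}^{d}\frac{2i-1-d}{2}=0,
\]
so $\soff$ has entry sum $0$ and (ii) is just $\sum p_i=n$. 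The integrality part of (i) forces $p\in\Zz^d$ when $d$ is odd and $p\in(\Zz+\tfrac12)^d$ when $d$ is even, in both cases placing $p$ on an affine lattice of rank $d-1$ inside the hyperplane $\sum p_i=n$.

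Next I would handle the nonnegativity and upper bounds in (i). Since these must hold for every $\sigma$ and since $\sigma(\soff)_i$ ranges over the full entry set of $\soff$ as $\sigma$ varies, the binding constraints come from pairing $p_i$ with the extremal entries $\pm(d-1)/2$ of $\soff$; explicitly,
\[
\tfrac{d-1}{2}\leq p_i\leq n-\tfrac{d-1}{2}\qquad\text{for every }i.
\]
I would then introduce shifted variables $q_i:=p_i-(d-1)/2$. In both parities $q_i\in\Zz_{\geq 0}$, and $\sum q_i = n-d(d-1)/2 = n-\binom{d}{2}$. The nominal upper bound $q_i\leq n-(d-1)$ is automatic from $q_i\leq\sum_j q_j=n-\binom{d}{2}$ whenever $d\geq 2$ (since $\binom{d}{2}\geq d-1$), and the $d=1$ case is trivial.

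The count therefore reduces cleanly to the number of weak compositions of $n-\binom{d}{2}$ into $d$ parts, which by stars and bars is
\[
\binom{n-\binom{d}{2}+d-1}{d-1}=\binom{n-(d-1)(d-2)/2}{d-1},
\]
matching the stated formula. There is no real obstacle here; the proof is bookkeeping once the extremal-entry observation has reduced the problem to a single uniform inequality. The only steps that deserve care are verifying that $\soff$ has entry sum zero (so that the sum constraint on $p$ decouples from $\soff$) and checking that the shifted upper bound on each $q_i$ is redundant, so that the answer collapses to a single binomial coefficient rather than an inclusion-exclusion over a bounded box.
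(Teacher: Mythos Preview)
Your proposal is correct and follows essentially the same approach as the paper: reduce the constraint $p+\sigma(\soff)\in V(d,n)$ to coordinate bounds on $p$, shift to nonnegative integer variables, and count via stars and bars. The paper splits into cases according to the parity of $d$ (writing $d=2c+1$ or $d=2c$) and performs the shift separately in each case, whereas your single substitution $q_i=p_i-(d-1)/2$ handles both parities at once---a small but genuine streamlining.
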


\begin{proof}
First, suppose that $d = 2c+1$ is odd.  In order to satisfy the conditions
of Prop.~\ref{permutohedron-vector}, it suffices to choose a lattice point $p=(a_1,\dots,a_d)$ so that 
$\sum a_i=n$ and $c\leq a_i\leq n-c$ for all $i$.
Subtracting $c$ from each $a_i$ gives a bijection to
compositions of $n-cd$ with $d$ nonnegative parts and no part greater than $n-2c$
(that latter condition is extraneous for $d\geq 2$).  The number of these compositions is
$$\binom{n-cd+d-1}{d-1}=\binom{n-\frac{(d-1)(d-2)}{2}}{d-1}.$$

Second, suppose that $d = 2c$ is even.  Now it suffices to choose a point
$p=(a_1+1/2,\dots,a_d+1/2)\in(\Zz+\frac12)^d$ such that $a_1+\dots+a_d=n-c$ and, for each $i$,
$a_i+1/2+(1-d)/2\geq 0$ and $a_i+1/2+(d-1)/2\leq n$, that is,
i.e., $c-1\leq a_1\leq n-c$.  Subtracting $c-1$ from each $a_i$ gives a bijection to
compositions of $n-c-d(c-1)=n-d(d-1)/2$ with $d$ nonnegative parts,
none of which can be greater than $n-d+1$ (again, the last condition is extraneous).
The number of these compositions is 
$$\binom{n-d(d-1)/2+d-1}{d-1}=\binom{n-\frac{(d-1)(d-2)}{2}}{d-1}.$$
\end{proof}

The permutohedron vectors account for ``almost all'' of the eigenvectors in
the following sense: if $\mathcal{H}_{d,n}\subseteq\Rr^N$ be the linear span of the eigenvectors constructed in Props.~\ref{permutohedron-vector}
and~\ref{pack-permutohedra}, then for each fixed $d$, we have
\begin{equation} \label{dominates}
\lim_{n\to\infty} \frac{\dim\mathcal{H}_{d,n}}{|V(d,n)|}
=\lim_{n\to\infty} \frac{\binom{n-\frac{(d-1)(d-2)}{2}}{d-1}}{\binom{n+d-1}{d-1}}=1.
\end{equation}
On the other hand, the combinatorial structure of the remaining eigenvectors
is not clear.

The next result is a partial generalization of Proposition~\ref{basis-lemma}.

\begin{proposition} \label{HorthoX}
Every $\H_{p,\soff}$ is orthogonal to every $\X^{(i,j)}_\alpha$.
\end{proposition}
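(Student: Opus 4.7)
The plan is to expand the inner product $\H_{p,\soff}\cdot\X^{(i,j)}_\alpha$ from the definitions and show that the nonzero contributions cancel in pairs. Since $\X^{(i,j)}_\alpha$ is the characteristic vector of the lattice line through $\alpha$ in direction $\e_i-\e_j$ (that is, it is supported on vertices agreeing with $\alpha$ in every coordinate outside positions $i$ and $j$), the inner product reduces to
$$\H_{p,\soff}\cdot\X^{(i,j)}_\alpha \;=\; \sum_{\sigma} \sgn(\sigma),$$
where the sum runs over those $\sigma\in\Sym_d$ for which $p+\sigma(\soff)$ lies on this lattice line.

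The main combinatorial step is to describe the contributing $\sigma$ as a disjoint union of pairs $\{\sigma,\sigma\tau_{ij}\}$. For any $\sigma,\sigma'\in\Sym_d$, the points $p+\sigma(\soff)$ and $p+\sigma'(\soff)$ share the lattice line iff their $k$-th coordinates agree for every $k\neq i,j$. Because the entries of $\soff$ are pairwise distinct (see~\eqref{standard-offset}), this equality of coordinates forces $\sigma^{-1}$ and $(\sigma')^{-1}$ to agree on $\{1,\dots,d\}\setminus\{i,j\}$; as bijections agreeing on a set of size $d-2$, they must either coincide or differ by $\tau_{ij}$, giving $\sigma'\in\{\sigma,\sigma\tau_{ij}\}$. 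Moreover $\sigma\neq\sigma\tau_{ij}$, since otherwise $\soff$ would have two equal entries. Thus the set of contributing $\sigma$ partitions into genuine two-element orbits of the right-multiplication by $\tau_{ij}$, and within each orbit the two permutations have opposite sign, so the whole sum vanishes.

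The one preliminary point to settle is that both members of such a pair are legitimately indexed by vertices of $\SR(d,n)$ and therefore appear in $\H_{p,\soff}$: this is immediate from the standing hypothesis of Proposition~\ref{permutohedron-vector} that $\{p+\tau(\soff)\st\tau\in\Sym_d\}$ is a subset of $V(d,n)$, rather than just the initial $\sigma$ we picked. I do not anticipate any real obstacle here; the whole argument is short once distinctness of the entries of $\soff$ is invoked to pin $\sigma'$ down up to $\tau_{ij}$, after which the sign cancellation is automatic. The same pairing argument in fact works for any offset vector with distinct entries, not merely the standard one.
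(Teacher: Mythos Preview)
Your proof is correct and follows essentially the same approach as the paper: both compute the inner product as $\sum_\sigma \sgn(\sigma)$ over those $\sigma$ with $p+\sigma(\soff)$ on the given lattice line, and both cancel the terms via the sign-reversing involution given by composition with $\tau_{ij}$. Your version is slightly more detailed in justifying that the index set is closed under this involution; one small bookkeeping point is that, depending on the action convention, it is $\sigma$ and $\sigma'$ themselves (rather than their inverses) that agree on $\{1,\dots,d\}\setminus\{i,j\}$, but this does not affect the argument since the pairing and sign cancellation work either way.
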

\begin{proof}
By definition we have 
\[
\X_\alpha^{(i,j)} \cdot \H_{p,\soff}
= \left( \e_\alpha + \sum_{\beta:\ \beta\simij\alpha} \e_\beta \right)\cdot
\left(\sum_{\sigma\in\Sym_d} \sgn(\sigma) \e_{p+\sigma(\soff)} \right)
= \sum_{\sigma:\ p+\sigma(\soff)\simij\alpha} \sgn(\sigma).
\]
The index set of this summation admits the fixed-point-free involution
$(\beta,\sigma)\leftrightarrow(\tau\beta,\tau\sigma)$, and $\sgn(\tau\sigma)=-\sgn(\sigma)$,
so the sum is zero.
\end{proof}

Geometrically, Proposition~\ref{HorthoX} says that if a lattice line
meets a lattice permutohedron of the form of Prop.~\ref{permutohedron-vector},
then it does so in exactly two points, whose corresponding permutations have opposite signs.

\begin{conjecture} \label{span-ortho}
The vectors $\X^{(i,j)}_\alpha$ span the orthogonal
complement of $\mathcal{H}_{d,n}$.
\end{conjecture}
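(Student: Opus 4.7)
The plan is to establish the conjecture by reinterpreting it combinatorially and then inducting on $d$. Proposition~\ref{HorthoX} already gives the inclusion $\text{span}\{\X^{(i,j)}_\alpha\} \subseteq \mathcal{H}_{d,n}^\perp$, so the real content is the reverse inclusion. Since $\e_\alpha \cdot \X^{(i,j)}_\beta$ equals $1$ when $\alpha$ lies on the lattice line through $\beta$ in direction $\e_i - \e_j$ and $0$ otherwise, a vector $f = \sum_\alpha f_\alpha \e_\alpha$ annihilates every $\X^{(i,j)}_\alpha$ precisely when $\sum_{\beta \in L} f_\beta = 0$ for every lattice line $L \subseteq n\Delta^{d-1}$. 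Thus the conjecture is equivalent to the assertion that every \emph{lattice-line-balanced} function on $V(d,n)$ is a linear combination of signed permutohedron vectors $\H_{p,\soff}$.

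I would then induct on $d$, with the base case $d=3$ furnished by Proposition~\ref{basis-lemma}, which in this language says exactly that hexagon vectors span the lattice-line-balanced functions on $V(3,n)$. For $d \geq 4$, partition $V(d,n) = \bigsqcup_{k=0}^n V_k$ where $V_k = \{\alpha \st \alpha_d = k\}$; each slice $V_k$ is canonically identified with $V(d-1, n-k)$, and the lattice lines contained in $V_k$ are precisely the lattice lines of $V(d-1, n-k)$. For any lattice-line-balanced $f$, each restriction $f|_{V_k}$ is itself lattice-line-balanced on its slice, so by the inductive hypothesis it is a combination of permutohedron vectors in $V(d-1, n-k)$. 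The constraints from the ``transverse'' lattice lines in directions $(i,d)$ for $i < d$, which link distinct slices, must then force these slicewise decompositions to assemble into a global decomposition of $f$ into $d$-dimensional permutohedron vectors.

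The main obstacle is this assembly step. Even for $d=3$, Proposition~\ref{basis-lemma} relies on a block-matrix argument that trades on the symmetric interaction of three coordinate directions; for $d \geq 4$ transverse lattice lines pass through several slices, so coordinating the lower-dimensional permutohedra across slices requires identifying all linear relations among the $\X^{(i,j)}_\alpha$, generalizing Lemma~\ref{XYZ-relations} in a way that grows combinatorially more intricate with $d$. A cleaner alternative, which I would pursue if the direct approach becomes unwieldy, is to exploit the $\Sym_d$-action: one checks that $\tau \cdot \H_{p,\soff} = \sgn(\tau)\,\H_{\tau(p),\soff}$ while each $\X^{(i,j)}_\alpha$ is fixed by $\tau_{ij}$, so $\mathcal{H}_{d,n}$ and $\text{span}\{\X^{(i,j)}_\alpha\}$ live in complementary pieces of the $\Sym_d$-isotypic decomposition of $\Rr^N$, and a representation-theoretic argument might yield the required dimension equality without constructing an explicit basis.
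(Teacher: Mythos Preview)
This statement is a \emph{conjecture} in the paper, not a theorem: the authors do not prove it, but only report computational verification for $d=4$, $n\leq 11$ and $d=5$, $n\in\{7,8,9\}$, and remark that the difficulty is precisely that no natural basis among the $\X^{(i,j)}_\alpha$ is apparent. So there is no proof in the paper to compare against, and your proposal should be read as an attempt at an open problem.

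As such, your proposal is a sketch with a self-acknowledged gap rather than a proof. The inductive strategy is reasonable in spirit, but the ``assembly step'' you flag is the entire content of the problem: a $d$-dimensional permutohedron does not slice into $(d-1)$-dimensional permutohedra (a slice $\{\alpha_d=k\}$ meets a lattice permutohedron in either $0$ or $(d-1)!$ points, and those points form a translate of $\Sym_{d-1}\cdot\soff_{d-1}$ only for the extremal slices), so knowing that each $f|_{V_k}$ is a combination of $(d-1)$-permutohedra does not obviously let you reconstruct $f$ as a combination of $d$-permutohedra. You would need to show that the transverse constraints rigidly determine how the slicewise decompositions fit together, and nothing in your outline does this.

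The representation-theoretic alternative has a concrete error. It is true that $\tau\cdot\H_{p,\soff}=\sgn(\tau)\,\H_{\tau(p),\soff}$ and that $\tau_{ij}$ fixes $\X^{(i,j)}_\alpha$, but these facts do \emph{not} place $\mathcal{H}_{d,n}$ and $\mathrm{span}\{\X^{(i,j)}_\alpha\}$ in disjoint isotypic components. The permutohedron space is (sign)$\otimes$(permutation module on centers $p$), which contains many irreducibles; likewise the span of the $\X^{(i,j)}_\alpha$ is $\Sym_d$-stable but each individual $\X^{(i,j)}_\alpha$ is only fixed by the single transposition $\tau_{ij}$, not by all of $\Sym_d$. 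There is no reason these two modules should be isotypically disjoint, and indeed already for $d=3$ both contain copies of the standard representation. So the hoped-for dimension count does not follow.
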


This conjecture is equivalent to the statement
that every other eigenvector of $A(d,n)$ can be written as a linear
combination of the $\X^{(i,j)}_\alpha$.  For $n<\binom{d}{2}$, the conjecture
is that the $\X^{(i,j)}_\alpha$ span all of $\Rr^N$.
We have verified this statement
computationally for $d=4$ and $n\leq 11$, and for $d=5$ and $n=7,8,9$.
Part of the difficulty is that it is not clear what subset of the $\X^{(i,j)}_\alpha$ ought to form a
basis (in contrast to the case $d=3$, where $\BB'_n$ is a natural
choice of basis; see Prop.~\ref{basis-lemma}).

\subsection{The smallest eigenvalue} \label{Ferrers-section}

For a matrix $M$ with real spectrum,
let $\tau(M)$ denote its smallest eigenvalue,
and for a graph $H$, let $\tau(G)=\tau(A(G))$.
The invariant $\tau(G)$ of a graph is important in spectral graph theory; for
instance, it is related to the independence number~ \cite[Lemma 9.6.2]{GodRoy}.

\begin{proposition} \label{smallest-eigen}
Suppose that $d\geq 1$ and $n\geq\binom{d}{2}$.  Then $\tau(\SR(d,n))=-\binom{d}{2}$.
\end{proposition}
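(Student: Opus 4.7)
The plan is to prove $\tau(\SR(d,n))=-\binom{d}{2}$ by establishing the matching upper and lower bounds separately. The upper bound $\tau(\SR(d,n))\leq-\binom{d}{2}$ is essentially free from the work already done: the hypothesis $n\geq\binom{d}{2}$ is exactly what is needed in Proposition~\ref{pack-permutohedra} to guarantee the existence of a center $p$ for which $\H_{p,\soff}$ is defined (one checks that $\binom{n-(d-1)(d-2)/2}{d-1}\geq 1$ when $n\geq\binom{d}{2}$), and then Proposition~\ref{permutohedron-vector} exhibits this as a nonzero eigenvector of $A$ with eigenvalue $-\binom{d}{2}$.

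The substantive half is the lower bound $\tau(\SR(d,n))\geq-\binom{d}{2}$, which I would prove by a clique-partition / positive-semidefinite argument. For each pair $1\leq i<j\leq d$, the lattice lines in direction $\e_i-\e_j$ partition the vertex set $V(d,n)$ into cliques of $\SR(d,n)$, since any two vertices on a common such line differ only in coordinates $i$ and $j$. Crucially, every edge of $\SR(d,n)$ lies in exactly one such lattice line, namely the one indexed by the unique pair of coordinates in which its endpoints differ. For each lattice line $\ell$ let $v_\ell\in\Rr^N$ be its characteristic vector, so that $v_\ell v_\ell^T$ is the $N\times N$ matrix with $1$'s in the rows and columns indexed by vertices of $\ell$; the adjacency matrix of the clique on $\ell$ is then $v_\ell v_\ell^T - I_\ell$, where $I_\ell$ is the diagonal matrix that is $1$ on $\ell$ and $0$ elsewhere.

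Summing over all lattice lines and using the edge partition gives
\[
A = \sum_\ell \bigl(v_\ell v_\ell^T - I_\ell\bigr).
\]
Since each vertex of $\SR(d,n)$ lies on exactly $\binom{d}{2}$ lattice lines (one for each pair of coordinates), the diagonal contribution collapses to $\sum_\ell I_\ell = \binom{d}{2}\,I$, yielding
\[
A + \binom{d}{2}\,I \;=\; \sum_\ell v_\ell v_\ell^T.
\]
The right-hand side is a sum of rank-one positive semidefinite matrices, hence positive semidefinite, so every eigenvalue of $A$ is at least $-\binom{d}{2}$. Combined with the upper bound from the permutohedron vector, this gives equality.

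The main (small) obstacle is verifying the edge-partition claim cleanly: one must argue that the lattice lines, taken over \emph{all} pairs $(i,j)$, cover each edge of $\SR(d,n)$ exactly once, not twice or more. This is immediate from the definition of adjacency---two adjacent vertices agree in all but two coordinates, and those two coordinates uniquely determine the pair $(i,j)$ and hence the line. Everything else is a straightforward manipulation, and no further spectral input is required.
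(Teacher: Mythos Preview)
Your proof is correct and follows essentially the same approach as the paper: both obtain the upper bound from the existence of a permutohedron eigenvector and the lower bound by decomposing $A$ as a sum over the $\binom{d}{2}$ directions, each contributing a disjoint union of cliques. The paper phrases the lower bound via the Weyl-type inequality $\tau(M+N)\geq\tau(M)+\tau(N)$ applied to the spanning subgraphs $\SR(d,n)_{i,j}$ (each with $\tau=-1$), whereas you unwind this one step further into the explicit PSD identity $A+\binom{d}{2}I=\sum_\ell v_\ell v_\ell^T$; these are the same argument.
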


\begin{proof}
By the construction of Proposition~\ref{pack-permutohedra}, there is at least one
eigenvector with eigenvalue $-\binom{d}{2}$ when $n\geq\binom{d}{2}$.  The following argument
that $-\binom{d}{2}$ is in fact the smallest eigenvalue was suggested to the authors by Noam Elkies.
The edges of $\SR(d,n)$ in direction $(i,j)$ form a spanning subgraph $\SR(d,n)_{i,j}$ isomorphic to $K_{n+1}+K_n+K_{n-1}+\cdots+K_1$, where $+$ means disjoint union.  The eigenvalues of $K_n$ are $n-1$ and $-1$, and the spectrum of $G+H$ is the union of the spectra of $G$ and $H$, so $\tau(\SR(d,n)_{i,j})=-1$.  Since the edge set of
$\SR(d,n)$ is the disjoint union of the edge sets of the $\SR(d,n)_{i,j}$, we have
$A(d,n)=\sum_{(i,j)} A(\SR(d,n)_{i,j})$, and in general $\tau(M+N)\geq\tau(M)+\tau(N)$, so $\tau(\SR(d,n))\geq-\binom{d}{2}$ as desired.
\end{proof}

The case $n<\binom{d}{2}$ is more complicated.  Experimental evidence
indicates that the smallest eigenvalue of $\SR(d,n)$ is $-n$, and
moreover that the multiplicity of this eigenvalue equals the number
$M(d,n)$ of permutations in $\Sym_d$ with exactly $n$ inversions.  The
numbers $M(d,n)$ are well known in combinatorics as the \emph{Mahonian
numbers}, or as the coefficients of the $q$-factorial polynomials; see
\cite[sequence~\#A008302]{OEIS}.  In the rest of this
section, we construct $M(d,n)$ linearly independent eigenvectors of
eigenvalue $-n$; however, we do not know how to rule out the
possibility of additional eigenvectors of equal or smaller eigenvalue

We review some basics of rook theory; for a general reference,
see, e.g., \cite{rook-notes}.
For a sequence of positive integers $c=(c_1,\dots,c_d)$,
the \emph{skyline board} $\Sky(c)$ consists of a sequence of $d$ columns,
with the $i^{th}$ column containing $c_i$ squares.  A \emph{rook placement}
on $\Sky(c)$ consists of a choice of one square in each column.
A rook placement is \emph{proper} if all $d$ squares belong to different rows.

An \emph{inversion} of a permutation $\pi=(\pi_1,\dots,\pi_d)\in\Sym_d$ is a pair $i,j$ such that $i<j$ and $\pi_i>\pi_j$.
Let $\Sym_{d,n}$ denote the set of permutations of $[d]$ with exactly $n$ inversions.

\begin{definition}
Let $\pi\in\Sym_{d,n}$.  The \emph{inversion word} of $\pi$ is $a=a(\pi)=(a_1,\dots,a_d)$, where
\[a_i=\#\{j\in[d] \st i<j \text{ and } \pi_i>\pi_j\}.\]
Note that $a$ is a weak composition of $n$ with $d$ parts, hence
a vertex of $\SR(d,n)$.
A permutation $\sigma\in\Sym_{d,n}$ is
\emph{$\pi$-admissible} if $\sigma$ is a proper skyline rook placement on
$\Sky(a_1+1,\dots,a_d+d)$; that is, if
\[
x(\sigma)=a(\pi)+\soff-\sigma(\soff)=a(\pi)+\id-\sigma
\]
is a lattice point in $n\Delta^{d-1}$.  Note that the coordinates of $x(\sigma)$ sum to $n$,
so admissibility means that its coordinates are all nonnegative.
The set of all $\pi$-admissible permutations is denoted $\Adm(\pi)$; that is,
\[
\Adm(\pi) = \{\sigma\in\Sym_d \st a_i-\sigma_i+i \geq 0 \quad \forall i=1,\dots,d\}.
\]
The corresponding \emph{partial permutohedron}  is
\[
\ParP(\pi) = \{x(\sigma) \st \sigma\in\Adm(\pi)\}.
\]
That is, $\ParP(\pi)$ is the set of permutations corresponding to lattice points
in the intersection of $n\Delta^{d-1}$ with the standard permutohedron centered at
$a(\pi)+\soff$.
The \emph{partial permutohedron vector} is the signed characteristic vector of $\ParP(\pi)$,
that is,
\[
\F_\pi = \sum_{\sigma\in\ParP(\pi)} \sgn(\sigma) \e_{x(\sigma)}.
\]
\end{definition}

\begin{example}
Let $d=4$ and $\pi=3142\in\Sym_d$.  Then $\pi$ has $n=3$ inversions,
namely $12$, $14$, $34$.  Its inversion word is accordingly
$a=(2,0,1,0)$.  The $\pi$-admissible permutations are the proper
skyline rook placements on $\Sky(2+1,0+2,1+3,0+4)=\Sky(3,2,4,4)$,
namely 1234, 1243, 2134, 2143, 3124, 3142, 3214, 3241 (see
Figure~\ref{skyrook-figure}).  The corresponding lattice points
$x(\sigma)$ can be read off from the rook placements by counting the number of
empty squares above each rook, obtaining respectively 2010, 2001, 1110, 1101, 0120, 0102, 
0030, 0003; these are the neighbors of $a$ in $\ParP(\pi)$.  Thus
$\F_\pi= \e_{2010} - \e_{2001} - \e_{1110} + \e_{1101} + \e_{0120} - \e_{0102} - \e_{0030} + \e_{0003}$; see Figure~\ref{partperm-figure}.

\begin{figure}[ht]
\includefigure{2.2in}{0.9in}{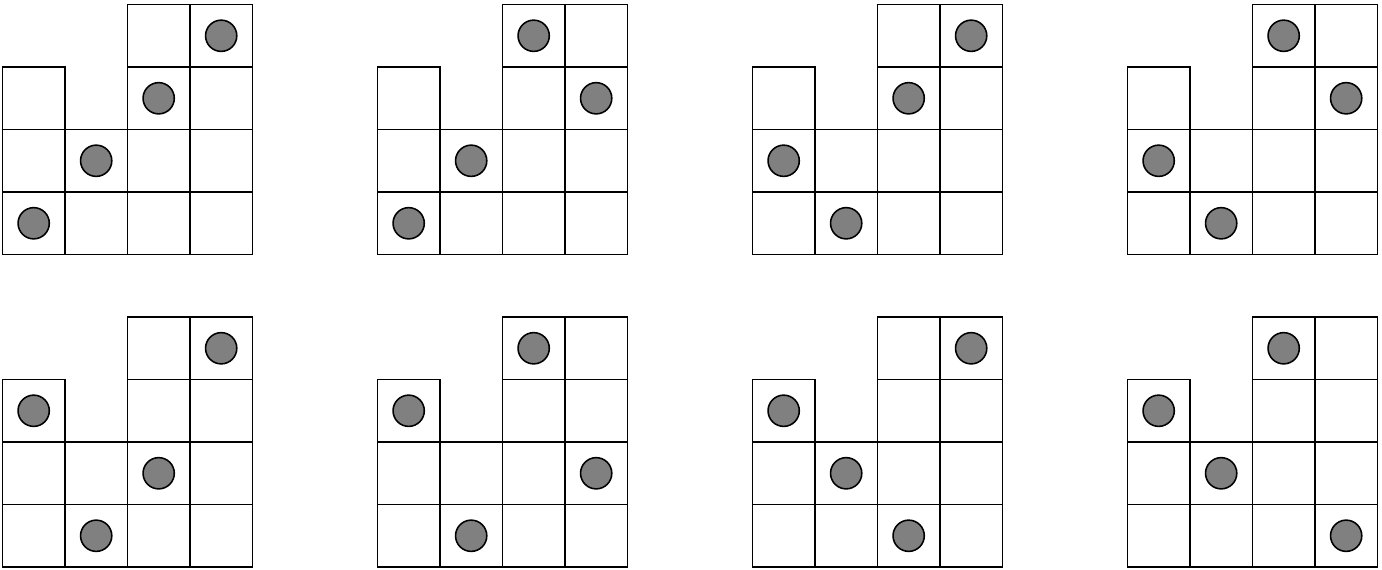} 
\caption{Rook placements on the skyline board $\Sky(3,2,4,4)$. \label{skyrook-figure}}
\end{figure}

\begin{figure}[ht]
\includefigure{2in}{2in}{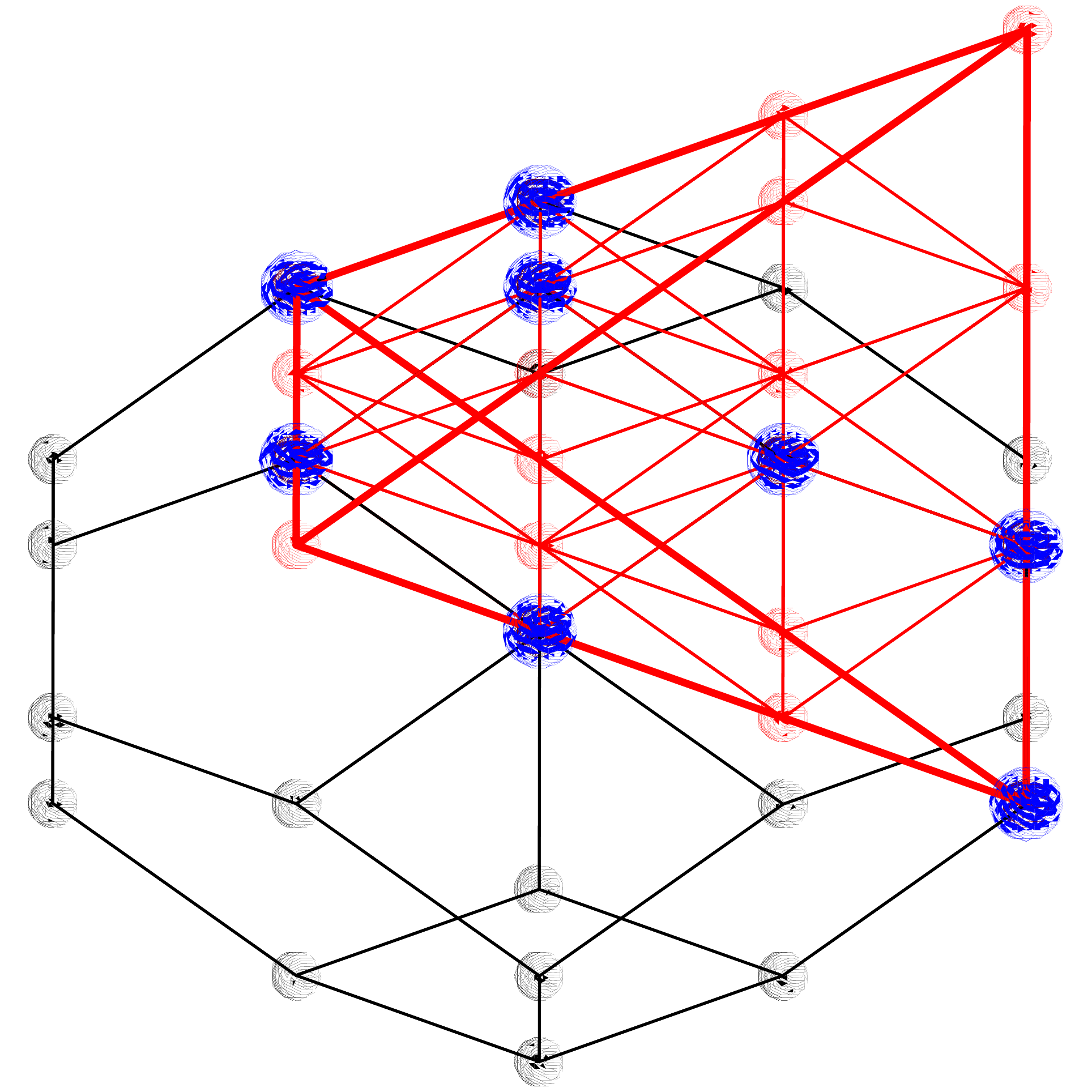}
\caption{The partial permutohedron $\ParP(3142)$ in $\SR(4,3)$.\label{partperm-figure}}
\end{figure}
\end{example}

\begin{theorem} \label{small-n-theorem}
Let $\pi\in\Sym_{d,n}$ and $A=A(d,n)$.
Then~$\F_\pi$ is an eigenvector of~$A$ with eigenvalue~$-n$.
Moreover, for every pair~$d,n$ with $n<\binom{d}{2}$, the set $\{\F_\pi \st \pi\in\Sym_{d,n}\}$ is linearly independent.
In particular, the dimension of the $(-n)$-eigenspace of $A$ is at 
least the Mahonian number $M(d,n)$.
\end{theorem}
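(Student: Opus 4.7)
The theorem splits into three parts: $\F_\pi$ is a $(-n)$-eigenvector of $A$, the family $\{\F_\pi\st \pi\in\Sym_{d,n}\}$ is linearly independent, and the dimension bound $\dim\ker(A+nI)\ge M(d,n) = |\Sym_{d,n}|$ is a formal consequence of the first two.  I would prove the eigenvalue claim by verifying $[A\F_\pi]_\gamma = -n[\F_\pi]_\gamma$ pointwise, using $[A\F_\pi]_\gamma = \sum_{\sigma\in\Adm(\pi),\,x(\sigma)\sim\gamma}\sgn(\sigma)$, and split into two cases.

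In the diagonal case $\gamma = x(\tau)$ with $\tau\in\Adm(\pi)$, the neighbors of $x(\tau)$ in $\ParP(\pi)$ are exactly the points $x(\tau\tau_{ij})$ with $\tau\tau_{ij}\in\Adm(\pi)$, each contributing sign $-\sgn(\tau)$, so it suffices to show that each $\tau\in\Adm(\pi)$ admits exactly $n$ admissible transpositions.  Viewing $\tau$ as a proper rook placement on the skyline board with column heights $b_k=a_k+k$, a transposition $\{i,j\}$ is admissible iff $\tau(j)\le b_i$ and $\tau(i)\le b_j$.  I would establish a bijection with empty squares strictly above rooks: send an empty square $(k,r)$ with $\tau(k)<r\le b_k$ to the pair $\{k,k'\}$ where $\tau(k')=r$; the chain $\tau(k)<\tau(k') = r\le b_{k'}$ (the right-hand inequality is admissibility of $\tau$ at $k'$) together with $r\le b_k$ confirms admissibility of the swap, and the inverse sends a valid swap $\{k,k'\}$ with $\tau(k)<\tau(k')$ to the empty square $(k,\tau(k'))$.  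Counting empty squares gives $\sum_k (b_k-\tau(k)) = \sum_k a_k = n$, as required.

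The off-diagonal case $\gamma\in V(d,n)\setminus\ParP(\pi)$, requiring $\sum_{\sigma\in\Adm(\pi),\,x(\sigma)\sim\gamma}\sgn(\sigma) = 0$, is where I expect the main technical work.  The natural involution $\sigma\leftrightarrow\sigma\tau_{i_\sigma j_\sigma}$, indexed by the unique pair $(i_\sigma,j_\sigma)$ of coordinates where $x(\sigma)$ and $\gamma$ disagree, cancels pairs whose both members lie in $\Adm(\pi)$, but ``boundary'' $\sigma$'s with $\sigma\tau_{i_\sigma j_\sigma}\notin\Adm(\pi)$ remain and must be cancelled across different $(i,j)$'s.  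One promising line is to compare $\F_\pi$ with the full permutohedron vector $\tilde\H_\pi = \sum_{\sigma\in\Sym_d}\sgn(\sigma)\e_{x(\sigma)}$ in the formal space indexed by $\{x\in\Zz^d\st\sum x_i = n\}$: the argument of Proposition~\ref{permutohedron-vector} applies verbatim to give $\tilde A\tilde\H_\pi = -\binom{d}{2}\tilde\H_\pi$, and restricting to $V(d,n)$ reduces the problem to showing that the ``overflow'' sum $\sum_{\sigma\notin\Adm(\pi),\,x(\sigma)\sim\gamma}\sgn(\sigma)$ also vanishes.  Alternatively, analyzing the ``virtual map'' $\sigma_\gamma(k) = a_k+k-\gamma_k$---which fails to be a permutation precisely because $\gamma\notin\ParP(\pi)$---together with its repeated-value classes and out-of-range positions should allow a direct sign-reversing involution on the boundary $\sigma$'s by transposing positions across defect classes.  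Making this cancellation fully explicit is the main technical obstacle.

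For linear independence, I would adapt the lexicographic leading term argument of Propositions~\ref{hex-prop} and~\ref{permutohedron-vector}.  The identity permutation lies in $\Adm(\pi)$ trivially (as $\id(k)=k\le a_k+k$), so $\F_\pi$ contains the summand $\sgn(\id)\e_{x(\id)} = \e_{a(\pi)}$.  For any $\sigma\ne\id$, if $k^* = \min\{k\st\sigma(k)\ne k\}$ then $\sigma(k^*)>k^*$ (since $\sigma$ fixes $1,\dots,k^*-1$ and is a bijection), so $x(\sigma)_{k^*} = a_{k^*}+k^*-\sigma(k^*) < a_{k^*} = x(\id)_{k^*}$ while $x(\sigma)_k = a_k$ for $k<k^*$; hence $x(\sigma) < a(\pi)$ lexicographically, making $\e_{a(\pi)}$ the lex leading term of $\F_\pi$.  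Since the Lehmer code $\pi\mapsto a(\pi)$ is injective on $\Sym_d$, distinct $\pi\in\Sym_{d,n}$ yield distinct leading terms, and a standard triangularity argument completes the proof of linear independence, from which the dimension bound is immediate.
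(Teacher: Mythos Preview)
Your linear independence argument and your diagonal-case argument are both correct and match the paper's proof closely; in fact your treatment of both is a bit more explicit than the paper's (which simply asserts that $\e_{a(\pi)}$ is the lex-leading term, and counts admissible swaps as $\sum_i(a_i+i-\sigma_i)=n$ without spelling out the bijection with empty squares above rooks).

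The genuine gap is the off-diagonal case, which you explicitly leave unfinished (``Making this cancellation fully explicit is the main technical obstacle'').  Your second proposed line---the ``virtual map'' $b_k=a_k+k-\gamma_k$---is exactly what the paper uses, but the paper carries it through.  The key observation is that $\sum_k b_k=\binom{d+1}{2}$, so $b$ can be viewed as an (improper) rook placement on the skyline board $\Sky(a_1+1,\dots,a_d+d)$, possibly with some $b_k\le 0$, and the elements of $N=\{\sigma\in\Adm(\pi):x(\sigma)\sim\gamma\}$ are precisely the proper placements obtained from $b$ by moving one rook up and one other rook down by the same amount.  The paper then does a case analysis on how $b$ fails to be proper: if two or more $b_i\le 0$ then $N=\emptyset$; if exactly one $b_i\le 0$ then the constraint $\sum b_k=\binom{d+1}{2}$ forces a repeated value $b_j=b_k$ among the remaining entries, and swapping the roles of $j$ and $k$ gives the involution; if all $b_i\ge 1$ then the same sum constraint rules out a single transposition of heights and forces either three rooks at one height or two pairs at two heights, and in each subcase transposing two of the equal-height columns gives the sign-reversing involution.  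Your first proposed route (reduce to showing the ``overflow'' sum over non-admissible $\sigma$ vanishes) is a valid reformulation but does not bypass this work: the overflow terms are indexed by exactly the same improper $b$-configurations and require the same case-by-case pairing.  Without this analysis the eigenvector claim is not established.
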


\begin{proof}

\textbf{First}, we show that the~$\F_\pi$ are linearly independent.
This follows from the observation that the lexicographically leading term of $\F_\pi$ is $\e_{a(\pi)}$, and
these terms are different for all $\pi\in\Sym_{d,n}$.

\textbf{Second}, let $\sigma\in\Adm(\pi)$.  Then the coefficient of
$\e_{x(\sigma)}$ in~$\F_\pi$ is $\sgn(\sigma)\in\{1,-1\}$.  We will show that the coefficient of
$\e_{x(\sigma)}$ in~$A\F_\pi$ is $-n\sgn(\sigma)$, i.e., that
\begin{equation} \label{F-eigenvalue-n}
\sgn(\sigma) \sum_\rho \sgn(\rho) = -n,
\end{equation}
the sum over all $\rho$ such that $\rho\sim\sigma$ and $\rho\in\ParP(\pi)$.
(Here and subsequently, $\sim$ denotes adjacency in $\SR(d,n)$.)
Each such rook placement $\rho$ is obtained by
multiplying~$\sigma$ by the transposition~$(i\;j)$, that is,
by choosing a rook at $(i,\sigma_i)$, choosing a second rook at $(j,\sigma_j)$ with $\sigma_j>\sigma_i$,
and replacing these two rooks with rooks in positions $(i,\sigma_j)$ and $(j,\sigma_i)$.
For each choice of $i$, there are $(a_i+i)-\sigma_i$ possible $j$'s, and $\sum_i(a_i+i-\sigma_i)=n$.
Moreover, the sign of each such $\rho$ is opposite to that of $\sigma$, proving~\eqref{F-eigenvalue-n}.

\textbf{Third,} let $y=(y_1,\dots,y_d)\in V(d,n)\sm\ParP(\pi)$.
Then the coefficient of
$e_{x(\sigma)}$ in $\F_\pi$ is $0$.
 We will show that the coefficient of
$\e_{x(\sigma)}$ in~$A\F_\pi$ is also 0, i.e., that
\begin{equation} \label{F-eigenvalue-zero}
\sum_{\sigma\in N} \sgn(\sigma) = 0.
\end{equation}
where $N=\{\rho:x(\rho)\sim y\}\cap\ParP(\pi)$.
In order to prove this, we will construct a sign-reversing
involution on~$N$.

Let $a=a(\pi)$ and let
$b=(b_1,\dots,b_d)=(a_1+1-y_1,a_2+2-y_2,\dots,a_d+d-y_d)$.  Note that
$b_i\leq a_i+i$ for every $i$; therefore, we can regard $b$ as a rook
placement on $\Sky(a_1+1,\dots,a_d+d)$.  (It is possible that $b_i\leq
0$ for one or more~$i$; we will consider that case shortly.)  To say
that $y\not\in\F_\pi$ is to say that~$b$ is not a proper $\pi$-skyline
rook placement; on the other hand, we have $\sum b_i = \binom{d+1}{2}$
(as would be the case if $b$ were proper).  Hence the elements of~$N$
are the proper $\pi$-skyline rook skyline placements obtained from~$b$
by moving one rook up and one other rook down, necessarily by the same
number of squares.  Let $\biqjr{i}{q}{j}{r}$ denote the rook placement
obtained by moving the~$i^{th}$ rook up to row~$q$ and the~$j^{th}$
rook down to row~$r$.

We now consider the various possible ways in which $b$ can fail to be
proper.

\Case{1} $b_i\leq 0$ for two or more $i$.  In this case $N=\0$,
because moving only one rook up cannot produce a proper $\pi$-skyline
rook placement.

\Case{2} $b_i\leq 0$ for exactly one $i$.  The other rooks in $b$ cannot
all be at different heights, because that would imply that $\sum
b_i\leq 0+(2+\cdots+d)<\binom{d+1}{2}$.  Therefore, either $N=\0$, or
else $b_j=b_k$ for some $j,k$ and there are
rooks at all heights except $q$ and $r$ for some $q,r<b_j=b_k$.

Then $\biqjr{i}{q}{j}{r}$ is proper if and only if
$\biqjr{i}{q}{k}{r}$ is proper, and likewise
$\biqjr{i}{r}{j}{q}$ is proper if and only if
$\biqjr{i}{r}{k}{q}$ is proper.  Each of these pairs is related by the
transposition $(j\;k)$, so we have the desired sign-reversing involution on~$N$.

\Case{3} $b_i\geq 1$ for all $i$.  Then the reason that $b$ is not proper
must be that some row has no rooks and some row has more than one rook.
There are several subcases:

\Case{3a} For some $q\neq r$, there are two rooks at height $q$, no
rooks at height $r$, and one rook at every other height.  But this is
impossible because then $\sum b_i=\binom{d+1}{2}+q-r\neq\binom{d+1}{2}$.

\Case{3b} There are four or more rooks at height $q$, or three at height $q$
and two or more at height $r$.  In both cases $N=\0$.

\Case{3c} We have $b_i=b_j=b_k$; no rooks at
heights $q$ or $r$ for some $q<r$; and one rook at every other
height.
Then
\[N \subseteq \left\{
\begin{array}{lll}
\biqjr{i}{r}{j}{q},&\biqjr{j}{r}{i}{q},&\biqjr{k}{r}{i}{q},\\
\biqjr{i}{r}{k}{q},&\biqjr{j}{r}{k}{q},&\biqjr{k}{r}{j}{q}.
\end{array}\right\}
\]
For each column of the table above, its two rook placements are related
by a transposition (e.g., $(j\;k)$ for the first column)
and either both or neither
of those rook placements are proper (e.g., for the first
column, depending on whether or not $b_i\leq r$).
Therefore, we have the desired sign-reversing
involution on $N$.

\Case{3d} We have $b_i=b_j=q$; $b_k=b_\ell=r$, and one rook at every
other height except heights $s$ and $t$.  Now the
desired sign-reversing involution on $N$ is
toggling the rook that gets moved down; for instance,
$\biqjr{j}{s}{k}{t}$ is proper if and only if 
$\biqjr{j}{s}{\ell}{t}$ is proper.  

This completes the proof of \eqref{F-eigenvalue-zero},
which together with \eqref{F-eigenvalue-n} completes the proof that
$\F_\pi$ is an eigenvector of $A(d,n)$ with eigenvalue $-n$.
\end{proof}

\begin{conjecture}
If $n\leq\binom{d}{2}$, then in fact $\tau(\SR(d,n))=-n$,
and the dimension of the corresponding eigenspace is the Mahonian
number $M(d,n)$.
\end{conjecture}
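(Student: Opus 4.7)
By Theorem~\ref{small-n-theorem}, we already have $M(d,n)$ linearly independent eigenvectors $\F_\pi$ of eigenvalue~$-n$, giving both $\tau(\SR(d,n)) \leq -n$ and a lower bound of $M(d,n)$ on the multiplicity. The conjecture therefore reduces to two complementary inequalities: $\tau(\SR(d,n)) \geq -n$, and the multiplicity of $-n$ is at most $M(d,n)$. My plan is to attack these separately and then combine them.

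For the lower bound on~$\tau$, the starting point is the Elkies-style identity
\[
A + \binom{d}{2} I \;=\; \sum_{C} J_C,
\]
where~$C$ ranges over the lattice lines (cliques) of $\SR(d,n)$ in every direction and $J_C$ is the all-ones matrix supported on~$C$. This immediately gives $\tau \geq -\binom{d}{2}$, which is not tight when $n < \binom{d}{2}$. To sharpen the bound I would exploit that many cliques are singletons: the line through $\alpha$ in direction $(i,j)$ is a singleton precisely when $\alpha_i = \alpha_j = 0$, and each such singleton contributes an extra $v_\alpha^2$ to the PSD quadratic form $Q(v) = \sum_C \bigl(\sum_{u \in C} v_u\bigr)^2$. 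The goal is to show that these extra contributions, together with controlled positivity from non-singleton cliques, amount to at least $\bigl(\binom{d}{2}-n\bigr)\|v\|^2$. A cleaner alternative would be to find a different PSD decomposition $A + nI = \sum_k Q_k$ with each $Q_k$ manifestly positive semidefinite; if the $Q_k$ can be indexed naturally by $\pi$-admissible configurations, this would simultaneously pinpoint the null space as the span of the~$\F_\pi$ and settle both halves at once.

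For the multiplicity bound, I would use that the $\Sym_d$-action on $\SR(d,n)$ commutes with~$A$, so the $(-n)$-eigenspace is an $\Sym_d$-submodule. Since $M(d,n)$ is the coefficient of~$q^n$ in $[d]_q!$, it equals the dimension of the degree-$n$ piece of the coinvariant ring of~$\Sym_d$, which is the regular representation as a graded $\Sym_d$-module. The indexing of the $\F_\pi$ by elements of $\Sym_{d,n}$ via inversion words matches the natural Artin (or Garsia--Stanton) basis of that piece, so I would attempt to build an explicit $\Sym_d$-equivariant isomorphism between the span of $\{\F_\pi\}$ and the degree-$n$ coinvariants, and then show that any eigenvector of $-n$ orthogonal to every $\F_\pi$ must vanish --- for instance by showing that the ``leading-coordinate'' map $v \mapsto \bigl(\langle v, \e_{a(\pi)}\rangle\bigr)_{\pi \in \Sym_{d,n}}$ is injective on the $(-n)$-eigenspace (which is plausible since the $\F_\pi$ already surject onto the target).

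The hard part will be the lower bound on~$\tau$: closing the gap between $-\binom{d}{2}$ and $-n$ requires genuinely combinatorial input about how lattice lines and partial permutohedra fit together, and is not a formal consequence of PSD manipulations. If this gap can be closed by producing a refined certificate whose null space equals the span of $\{\F_\pi\}$, the multiplicity bound will follow for free.
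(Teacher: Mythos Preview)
The statement you are attempting to prove is labeled a \emph{Conjecture} in the paper, not a theorem; the paper offers no proof.  Immediately after stating it, the authors write that they have only verified it computationally for $d\leq 6$ and that ``it is not clear in general how to rule out the possibility of a smaller eigenvalue, or of additional $(-n)$-eigenvectors linearly independent of the $\F_\pi$.''  So there is no paper proof to compare your proposal against.

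Your proposal, for its part, is not a proof either---and to your credit you say so explicitly (``The hard part will be\dots'', ``If this gap can be closed\dots'').  What you have written is a research plan with two main strategies, both reasonable but neither executed.  For the bound $\tau\geq -n$: the idea of refining the Elkies decomposition $A+\binom{d}{2}I=\sum_C J_C$ by accounting for singleton cliques is natural, but the quantitative step---showing that the extra positivity amounts to exactly $\bigl(\binom{d}{2}-n\bigr)\|v\|^2$---is the entire difficulty, and you do not carry it out.  The alternative of producing a direct PSD certificate $A+nI=\sum_k Q_k$ whose null space is $\mathrm{span}\{\F_\pi\}$ would indeed settle both halves at once, but no such certificate is given.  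For the multiplicity bound: the representation-theoretic framing via the coinvariant algebra is appealing, but the claim that the ``leading-coordinate'' map $v\mapsto(\langle v,\e_{a(\pi)}\rangle)_\pi$ is injective on the $(-n)$-eigenspace is itself equivalent to what you are trying to prove, and you offer no mechanism for establishing it.

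In short: the paper does not prove this statement, and neither does your proposal.  Your outline identifies the same two obstacles the authors name as open, and sketches avenues worth pursuing, but the key steps remain to be done.
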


We have verified this conjecture, using Sage, for all $d\leq 6$.
It is not clear in general how to rule out the possibility of a smaller
eigenvalue, or of additional ($-n$)-eigenvectors linearly independent
of the $\F_\pi$.

The proof of Theorem~\ref{small-n-theorem} implies that
every partial permutohedron $\ParP(\pi)$ induces an $n$-regular subgraph
of $\SR(d,n)$.  Another experimental observation is the following:
\begin{conjecture}
For every $\pi\in\Sym_{d,n}$, the induced subgraph $\SR(d,n)|_{\ParP(\pi)}$
is Laplacian integral.
\end{conjecture}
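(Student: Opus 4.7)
The plan is to reduce Laplacian integrality to adjacency integrality using regularity, harvest the extremal eigenvalues $\pm n$ from structure already in hand, and then attempt to recover the rest of the spectrum via a product decomposition of the induced subgraph, with a representation-theoretic fallback for ``indecomposable'' cases.

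First, since the induced subgraph $G_\pi := \SR(d,n)|_{\ParP(\pi)}$ is $n$-regular (established in the ``Second'' step of the proof of Theorem~\ref{small-n-theorem}) and bipartite (edges correspond to transpositions, which flip $\sgn$), we have $L_\pi = nI - A_\pi$, so Laplacian integrality is equivalent to adjacency integrality. Two eigenvalues are immediate: the Perron eigenvalue $n$ (with the all-ones vector on $\Adm(\pi)$) and $-n$, witnessed by $\F_\pi|_{\ParP(\pi)}$; the sign-sum identity in the proof of Theorem~\ref{small-n-theorem} gives $(A_\pi \F_\pi|_{\ParP(\pi)})_\sigma = -n\,\sgn(\sigma)$ for every $\sigma \in \Adm(\pi)$, since the sum there is carried out over exactly the neighbors of $\sigma$ that land in $\Adm(\pi)$.

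The core step is to produce explicit eigenvectors spanning the remaining $|\Adm(\pi)| - 2$ dimensions. The first approach I would try is a Cartesian product decomposition of $G_\pi$ into integral factors. Small cases are suggestive: for $\pi = 2143$, $G_\pi \isom K_2 \square K_2$; for $\pi = 2413$, $G_\pi \isom Q_3$; for $\pi = 3412$, $G_\pi \isom K_{3,3} \square K_2$. Each factorization should reflect a decomposition of the skyline board $\Sky(c_1,\dots,c_d)$ (with $c_i = a_i(\pi) + i$) into ``linked components'': maximal subsets $I \subseteq [d]$ of columns whose rooks are forced into a common row-set. When $\Sky(c)$ genuinely splits into independent subboards, $\Adm(\pi)$ factors as a direct product and $G_\pi$ as the corresponding Cartesian product; integrality would follow from the standard spectral formula for Cartesian products together with induction on $d$. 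The ``single-block'' extreme $\pi = w_0$ already reduces to the Cayley graph of $\Sym_d$ generated by all transpositions, whose integrality is classical: the conjugacy-class sum of transpositions is central and acts on the Specht module $V_\lambda$ as the scalar $\sum_{(i,j) \in \lambda} (j - i)$, with multiplicity $(\dim V_\lambda)^2$.

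The main obstacle is that the product decomposition is subtle even in small cases: for $\pi = 231$ we have $G_\pi \isom K_2 \square K_2$, but $\Adm(231) = \{123, 132, 213, 231\}$ is \emph{not} literally a direct product of smaller symmetric groups, and the product structure becomes visible only after a nontrivial relabeling (here, via $\sigma_1$ together with the relative order of $\sigma_2$ and $\sigma_3$). For such ``indecomposable'' $\pi$ I would fall back on representation theory: identify the largest subgroup $H \leq \Sym_d$ (plausibly a Young subgroup associated to the linked components) that preserves $\Adm(\pi)$, block-diagonalize $A_\pi$ via the isotypic decomposition of $\Rr^{\Adm(\pi)}$ under $H$, and then verify integrality block-by-block using branching rules and content formulas for $\Sym_d$. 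I expect translating the raw inequality system $\sigma_i \leq c_i$ into a tractable $\Sym_d$-module structure---and in particular pinpointing the largest $H$ that stabilizes $\Adm(\pi)$---to be the hardest part of the program.
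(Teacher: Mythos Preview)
The statement you are attempting to prove is a \emph{conjecture} in the paper, not a theorem: the authors explicitly say they have verified it only computationally for $d\leq 6$, that they ``do not know what the eigenvalues are,'' and offer no proof. So there is no argument in the paper to compare your proposal against.

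As for the proposal itself, it is a research outline rather than a proof, and you acknowledge as much. The reductions you make at the outset are sound: $n$-regularity of $G_\pi$ is indeed established in the paper, bipartiteness via $\sgn$ is correct, and the eigenvalues $\pm n$ follow as you say. The Cartesian-product heuristic is plausible for the examples you list, and the $\pi=w_0$ case does collapse to the transposition Cayley graph of $\Sym_d$, whose integrality is classical. But the crucial step---showing that every $G_\pi$ either decomposes as a Cartesian product of integral factors or admits enough symmetry for a representation-theoretic block diagonalization to force integer eigenvalues---is left entirely open. Your own example $\pi=231$ already shows that the product structure, when it exists, is not inherited transparently from a decomposition of $\Adm(\pi)$, and for general $\pi$ there is no reason to expect a large subgroup of $\Sym_d$ to stabilize $\Adm(\pi)$. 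Absent a concrete mechanism that covers the indecomposable cases, the proposal does not close the gap; it identifies where the difficulty lies (correctly, as far as I can tell) but does not resolve it.
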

We have verified this conjecture, using Sage, for all permutations of
length $d\leq 6$.  We do not know what the eigenvalues are, but these
graphs are not in general strongly regular (as evidenced by the
observation that they have more than 3 distinct eigenvalues).


\section{Corollaries, alternate methods, and further directions}

\subsection{The independence number}
The independence number of $\SR(d,n)$ can be interpreted
as the maximum
number of nonattacking ``rooks'' that can be placed on a simplicial
chessboard of side length $n+1$.  By \cite[Lemma 9.6.2]{GodRoy},
the independence number $\alpha(G)$ of a $\delta$-regular graph $G$ on $N$ vertices
is at most $-\tau N/(\delta-\tau)$, where
$\tau$ is the smallest eigenvalue of $A(G)$.  For $d=3$ and $n\geq 3$, we have $\tau=-3$,
which implies that the independence number $\alpha(\SR(d,n))$ is at most
$3(n+2)(n+1)/(4n+6)$.  This is of course a weaker result (except for a few small values of $n$)
than the exact value $\lfloor(3n+3)/2\rfloor$ obtained in \cite{NAQ} and \cite{Dots}.

\begin{question}
What is the independence number of $\SR(d,n)$?
That is, how many nonattacking rooks can be placed on a simplicial chessboard?
\end{question}

Proposition~\ref{smallest-eigen} implies the upper bound
$$\alpha(\SR(d,n)) \leq \frac{d(d+1)}{(2n+d)(d-1)}\binom{n+d-1}{d-1}$$
for $n\geq\binom{d}{2}$, but this bound is not sharp (for example, the bound for
$\SR(4,6)$ is $\alpha\leq21$, but computation indicates that $\alpha=16$).

\subsection{Equitable partitions}

One approach to determining the spectrum of a graph uses the theory of
\emph{interlacing} and \emph{equitable partitions} \cite{Haemers},
\cite[chapter~9]{GodRoy}.  Let $X=\{O_1,\dots,O_k\}$ be the set of
orbits of vertices of~$G$ under the group of automorphisms of~$G$.
For each two orbits $O_i,O_j$, define $f(i,j) = |N(x)\cap O_j|$ for any
$x\in O_i$.  The choice of $x$ does not matter, so that the function
$f$ is well-defined (albeit not necessarily symmetric); that is to
say, the orbits form an \emph{equitable partition} of $V(G)$.  Let
$P(G)$ be the $k\x k$ square matrix with entries $f(i,j)$.  Then every
eigenvalue of~$P$ is also an eigenvalue of~$A(G)$
\cite[Thm.~9.3.3]{GodRoy}.

When $G=\SR(n,d)$, the spectrum of $P(G)$ is typically a proper subset
of that of $A(G)$.  For example, when $n=3$ and $d=3$, the matrix
$A(G)$ has spectrum $6,1,1,1,0,0,-2,-2,-2,-3$ by
Theorem~\ref{main-theorem}, but the automorphism group has only three
orbits, so $P(G)$ is a $3\x 3$ matrix and must have a strictly smaller
set of eigenvalues.  In fact its spectrum is $6,1,-2$, which is not a
tight interlacing of that of $A(G)$ in the sense of Haemers.

Therefore, these methods may not be sufficient to describe the
spectrum of $\SR(n,d)$ in general.  On the other hand, in all cases we
have checked computationally ($d=4,n\leq 30$; $d=5,n\leq 25$), the
matrices $P(\SR(n,d))$ have integral spectra, which is consistent with
Conjecture~\ref{all-d-n}.

\begin{question} \label{unique-spectrum}
Is $\SR(d,n)$ determined up to isomorphism by its spectrum?
\end{question}

For $\SR(3,3)$, the answer to the question is ``yes,'' for the following reason.  A regular graph is integral if and only if its complement is integral, by \cite[Lemma~8.5.1]{GodRoy}.  Thus the complement $\overline{\SR(3,3)}$ is 3-regular and integral.  There are exactly thirteen such graphs, as classified by Bussemaker, Cvetkovi{\'c}, and Schwenk \cite{MR0465944,MR0409278,MR499520}; see also \cite[pp.~50--51]{SurveyIG}.  Only two of these have ten vertices, namely $\overline{\SR(3,3)}$ and the
Petersen graph, which are not cospectral.
For more on the general problem of which graphs are determined by their spectra, see \cite{MR2022290,MR2499010}.


\section*{Acknowledgements}
We thank Cristi Stoica for bringing our attention to references \cite{NAQ} and \cite{Dots},
and Noam Elkies and other members of MathOverflow for a stimulating discussion.
We also thank an anonymous referee for providing references on Question~\ref{unique-spectrum} and
for suggesting the argument that $\SR(3,3)$ is determined by its spectrum.  The open-source software package Sage \cite{Sage} was a valuable tool in carrying out this research.

\bibliographystyle{amsplain}
\bibliography{biblio2} 

\providecommand{\bysame}{\leavevmode\hbox to3em{\hrulefill}\thinspace}
\providecommand{\MR}{\relax\ifhmode\unskip\space\fi MR }
\providecommand{\MRhref}[2]{%
  \href{http://www.ams.org/mathscinet-getitem?mr=#1}{#2}
}
\providecommand{\href}[2]{#2}
\begin{thebibliography}{10}

\bibitem{SurveyIG}
K.~Bali{\'n}ska, D.~Cvetkovi{\'c}, Z.~Radosavljevi{\'c}, S.~Simi{\'c}, and
  D.~Stevanovi{\'c}, \emph{A survey on integral graphs}, Univ. Beograd. Publ.
  Elektrotehn. Fak. Ser. Mat. \textbf{13} (2002), 42--65 (2003).

\bibitem{Dots}
Simon~R. Blackburn, Maura~B. Paterson, and Douglas~R. Stinson, \emph{Putting
  dots in triangles}, J. Combin. Math. Combin. Comput. \textbf{78} (2011),
  23--32.

\bibitem{Spectra}
Andries~E. Brouwer and Willem~H. Haemers, \emph{Spectra of graphs},
  Universitext, Springer, New York, 2012. \MR{2882891}

\bibitem{MR0465944}
F.~C. Bussemaker and D.~M. Cvetkovi{\'c}, \emph{There are exactly 13 connected,
  cubic, integral graphs}, Univ. Beograd. Publ. Elektrotehn. Fak. Ser. Mat.
  Fiz. (1976), no.~544-576, 43--48.

\bibitem{rook-notes}
Fred Butler, Mahir Can, Jim Haglund, and Jeffrey~B. Remmel, \emph{Rook theory
  notes}, available at
  \href{http://www.math.ucsd.edu/~remmel/files/Book.pdf}{\tt
  http://www.math.ucsd.edu/$\sim$remmel/files/Book.pdf}, retrieved 4/22/2014.

\bibitem{MR0409278}
Drago{\v{s}}~M. Cvetkovi{\'c}, \emph{Cubic integral graphs}, Univ. Beograd.
  Publ. Elektrotehn. Fak. Ser. Mat. Fiz. (1975), no.~498--541, 107--113.

\bibitem{Cvetkovic}
Drago{\v{s}}~M. Cvetkovi{\'c}, Michael Doob, Ivan Gutman, and Aleksandar
  Torga{\v{s}}ev, \emph{Recent results in the theory of graph spectra}, Annals
  of Discrete Mathematics, vol.~36, North-Holland Publishing Co., Amsterdam,
  1988.

\bibitem{GodRoy}
Chris Godsil and Gordon Royle, \emph{Algebraic graph theory}, Graduate Texts in
  Mathematics, vol. 207, Springer-Verlag, New York, 2001.

\bibitem{Haemers}
Willem~H. Haemers, \emph{Interlacing eigenvalues and graphs}, Linear Algebra
  Appl. \textbf{226/228} (1995), 593--616.

\bibitem{Johnson}
Mike Krebs and Anthony Shaheen, \emph{On the spectra of {J}ohnson graphs},
  Electron. J. Linear Algebra \textbf{17} (2008), 154--167.

\bibitem{Lovasz}
L{\'a}szl{\'o} Lov{\'a}sz, \emph{On the {S}hannon capacity of a graph}, IEEE
  Trans. Inform. Theory \textbf{25} (1979), no.~1, 1--7.

\bibitem{Merris}
Russell Merris, \emph{Degree maximal graphs are {L}aplacian integral}, Linear
  Algebra Appl. \textbf{199} (1994), 381--389.

\bibitem{NAQ}
Gabriel Nivasch and Eyal Lev, \emph{Nonattacking queens on a triangle}, Math.
  Mag. \textbf{78} (2005), no.~5, 399--403.

\bibitem{MR499520}
Allen~J. Schwenk, \emph{Exactly thirteen connected cubic graphs have integral
  spectra}, Theory and applications of graphs ({P}roc. {I}nternat. {C}onf.,
  {W}estern {M}ich. {U}niv., {K}alamazoo, {M}ich., 1976), Lecture Notes in
  Math., vol. 642, Springer, Berlin, 1978, pp.~516--533.

\bibitem{OEIS}
N.\thinspace{}J.\thinspace{}A. Sloane, \emph{The {O}n-{L}ine {E}ncyclopedia of
  {I}nteger {S}equences}, 2012, published electronically at
  \href{http://oeis.org}{{\tt http://oeis.org}}.

\bibitem{EC2}
Richard~P. Stanley, \emph{Enumerative combinatorics. {V}ol. 2}, Cambridge
  Studies in Advanced Mathematics, vol.~62, Cambridge University Press,
  Cambridge, 1999, With a foreword by Gian-Carlo Rota and appendix 1 by Sergey
  Fomin.

\bibitem{Sage}
W.\thinspace{}A. Stein et~al., \emph{{S}age {M}athematics {S}oftware ({V}ersion
  5.0.1)}, The Sage Development Team, 2012, \href{http://www.sagemath.org}{{\tt
  http://www.sagemath.org}}.

\bibitem{MR2022290}
Edwin~R. van Dam and Willem~H. Haemers, \emph{Which graphs are determined by
  their spectrum?}, Linear Algebra Appl. \textbf{373} (2003), 241--272, Special
  issue on the Combinatorial Matrix Theory Conference (Pohang, 2002).

\bibitem{MR2499010}
\bysame, \emph{Developments on spectral characterizations of graphs}, Discrete
  Math. \textbf{309} (2009), no.~3, 576--586.

\end{thebibliography}
\end{document}